\setlist{noitemsep,leftmargin=20pt,topsep=0pt,parsep=1pt,partopsep=0pt}
\theoremstyle{plain}
\newtheorem{theorem}{Theorem}[section]
\newtheorem{lemma}{Lemma}[section]
\newtheorem{corollary}{Corollary}[section]
\newtheorem{convention}{Convention}[section]
\theoremstyle{definition}
\newtheorem{definition}{Definition}[section]
\newtheorem{notation}{Notation}[section]
\newtheorem{example}{Example}[section]
\theoremstyle{remark}
\newtheorem{remark}{Remark}[section]
\newtheorem{characteristic}[theorem]{Algorithmic characteristic}
\newtheorem{conjecture}[theorem]{Conjecture}
\DeclareMathOperator{\sigSym}{{\mathfrak{s}}}
\DeclareMathOperator{\headSym}{lt}
\DeclareMathOperator{\indexSym}{ind}
\DeclareMathOperator{\lcm}{lcm}
\newcommand{\lcmm}[2]{\mathrm{lcm}\left(\hdp{#1}, \hdp{#2} \right)}
\newcommand{\comment}[1]{}
\newcommand{\field}{\mathcal{K}}
\newcommand{\ring}{\mathcal{R}}
\newcommand{\numberGenerators}{m}
\newcommand{\gen}[1]{{#1}_1,\ldots,{#1}_\numberGenerators}
\newcommand{\mgen}{\boldsymbol{e}_1,\ldots,\boldsymbol{e}_\numberGenerators}
\newcommand{\module}{\ring^\numberGenerators}
\newcommand{\mon}{\mathcal{M}}
\newcommand{\modmon}{\mathcal{N}}
\newcommand{\const}{\kappa}
\newcommand{\hdsyz}{\ensuremath{\mathcal{H}}}
\newcommand{\pairset}{\mathcal{P}}
\newcommand{\proj}[1]{\overline{#1}}
\newcommand{\mbasis}[1]{\boldsymbol{e}_{#1}}
\newcommand{\gbasissome}{\alpha}
\newcommand{\gbasis}[1]{\gbasissome_{#1}}
\newcommand{\sig}[1]{\sigSym\left({#1}\right)}
\newcommand{\syz}[1]{\mathrm{syz}\left({#1}\right)}
\newcommand{\origBasis}{\mathcal G}
\newcommand{\basis}{\ensuremath{\origBasis}}
\newcommand{\ind}[1]{\indexSym\left({#1}\right)}
\newcommand{\support}[1]{\mathrm{sup}\left(#1\right)}
\newcommand{\rleqff}{\trianglelefteq_{\mathrm{add}}}
\newcommand{\rleqsb}{\trianglelefteq_{\mathrm{rat}}}
\newcommand{\rleq}{\trianglelefteq}
\newcommand{\set}[1]{\left\{{#1}\right\}}
\newcommand{\setBuilder}[2]{\left\{{#1}\left|{#2}\right.\right\}}
\newcommand{\ideal}[1]{\left\langle{#1}\right\rangle}
\newcommand{\lc}[1]{\mathrm{lc}\left({#1}\right)}
\newcommand{\hd}[1]{\headSym\left({#1}\right)}
\newcommand{\hdp}[1]{\hd{\proj{#1}}}
\newcommand{\spair}[2]{\ensuremath{\mathrm{spair}\left({#1},{#2}\right)}}
\newcommand{\spoly}[2]{\ensuremath{\mathrm{spol}\left({#1},{#2}\right)}}
\newcommand{\spp}[1]{\ensuremath{\left(\sig{#1},\proj{#1}\right)}}
\newcommand{\potl}{\ensuremath{<_\textrm{pot}}}
\newcommand{\topl}{\ensuremath{<_\textrm{top}}}
\newcommand{\dpotl}{\ensuremath{<_\textrm{d-pot}}}
\newcommand{\dtopl}{\ensuremath{<_\textrm{d-top}}}
\newcommand{\hdpotl}{\ensuremath{<_\textrm{lt-pot}}}
\newcommand{\hdtopl}{\ensuremath{<_\textrm{lt-top}}}
\newcommand{\schl}{\hdpotl}
\newcommand{\updatesyzwitharg}{\ensuremath{\text{\bf
  UpdateSyz}\left(\basis,\hdsyz\right)}}
\newcommand{\updatesyz}{\ensuremath{\text{\bf
  UpdateSyz}}}
\newcommand{\rewritablenoarg}{\ensuremath{\text{\bf Rewritable}}}
\newcommand{\rewritablenoargnew}{\ensuremath{\text{\bf Rewritable'}}}
\newcommand{\notrewritable}[1]{\ensuremath{\text{\bf not\;
  Rewritable}\left({#1,\basis\cup\hdsyz,\rleq}\right)}}
\newcommand{\std}{{\bf STD}}
\newcommand{\rba}{{\bf RB}}
\newcommand{\gvw}{{\bf GVW}}
\newcommand{\ggv}{{\bf G2V}}
\newcommand{\gggv}{{\bf ImpG2V}}
\newcommand{\rwc}{{\bf RC}}
\newcommand{\prc}{{\bf PC}}
\newcommand{\chc}{{\bf CC}}
\newcommand{\ff}{\ensuremath{\text{\bf F5}}}
\newcommand{\singular}{\ensuremath{\text{\sc Singular}}}
\newcommand*{\defeq}{\mathrel{\vcenter{\baselineskip0.5ex \lineskiplimit0pt
                     \hbox{\scriptsize.}\hbox{\scriptsize.}}}%
                     =}
\newcommand{\sreduction}{\ensuremath{\sigSym}-reduction}
\newcommand{\sreductions}{\ensuremath{\sigSym}-reductions}
\newcommand{\sreduce}{\ensuremath{\sigSym}-reduce}
\newcommand{\sreduces}{\ensuremath{\sigSym}-reduces}
\newcommand{\sreduced}{\ensuremath{\sigSym}-reduced}
\newcommand{\sreducible}{\ensuremath{\sigSym}-reducible}
\newcommand{\sreducing}{\ensuremath{\sigSym}-reducing}
\newcommand{\sreducer}{\ensuremath{\sigSym}-reducer}
\newcommand{\sreducers}{\ensuremath{\sigSym}-reducers}
\newcommand{\grobner}{Gr\"obner}
\newcommand{\faugere}{Faug\`ere}
\newcommand{\hideAppendix}[1]{}\newcommand{\appRef}[1]{the online appendix of \cite{pracgb}}\newcommand{\AppRef}[1]{The online appendix of \cite{pracgb}}
\begin{document}
\title{Predicting zero reductions in \grobner{} basis computations}
\author{%
Christian Eder\footnote{The author was supported by the
  EXACTA grant (ANR-09-BLAN-0371-01) of the French National
  Research Agency.}\\
c/o Department of Mathematics \\
University of Kaiserslautern \\
67653 Kaiserslautern, Germany \\
\url{ederc@mathematik.uni-kl.de}
}
\maketitle

\begin{abstract}
Since Buchberger's initial algorithm for computing \grobner{} bases in 1965 many
attempts have been taken to detect zero reductions in advance. Buchberger's
Product and Chain criteria may be known the most, especially in the
installaton of Gebauer and M\"oller. 
A relatively new approach are signature-based criteria which were first used 
in \faugere{}'s \ff{} algorithm in 2002. For regular input sequences these
criteria are known to compute no zero reduction at all. 
In this paper we give a detailed discussion on zero reductions and the
corresponding syzygies.
We explain how the different methods to predict them compare to each other and show
advantages and drawbacks in theory and practice. 
With this a new insight into algebraic structures underlying \grobner{}
bases and their computations might be achieved.
\end{abstract}
\section{Introduction}
Since $1965$ \cite{bGroebner1965} \grobner{} bases are practically feasible.
Besides theoretical studies and generalizations, see, for example,
~\cite{gpSingularBook2007,krCCA12009,krCCA22005,mayr-ritscher-2011}, one of the main algorithmic
improvements is the prediction of useless data during the computations.
That means to use criteria to detect zero reductions in advance
~\cite{bGroebnerCriterion1979,buchberger2ndCriterion1985}.
Gebauer and M\"oller gave an optimal
implementation of Buchberger's criteria~\cite{gmInstallation1988}, but showed that
not all zero reductions are discarded. In 2002 Faug\`ere presented the
\ff{} algorithm~\cite{fF52002Corrected} which uses new criteria based on
so-called ``signatures''. For regular input sequences \ff{}
does not compute any zero reduction at all. Over the years optimized variants of
\ff{} were presented, see, for example,
~\cite{epF5C2009,apF5CritRevised2011,ggvGGV2010,gvwGVW2011,rs-2012,erF5SB2013}.
Still, not much is known about the connection between Buchberger's criteria and
those being based on signatures. Do they cover each other? How do they behave in
different situations? 
In~\cite{gashPhD2008,gashF5t2009} Gash gave only a small note on possible
combinations of both attempts in very specific cases. In~\cite{gerdtHashemiG2V}
Gerdt and Hashemi have considered the usage of Buchberger's criteria in the
\ggv{} algorithm, i.e. restricted to a signature-based algorithm using
an incremental structure computing the \grobner{} basis. Gao, Volny and Wang
have added corresponding step to the 2013 revision of \gvw{}
(see~\cite{gvwGVW2013}).

Here we present a detailed discussion on the connection between
different attempts to predict zero reductions in \grobner{} basis computations.
After a short introduction to our notation in which we also introduce a generic
signature-based algorithm denoted \rba{} that mirrors Buchberger's algorithm,
we explain in
Section~\ref{sec:detection} Buchberger's Product and
Chain criteria as well as the Syzygy and Rewritten criteria of \rba{}. We
show that in general signature-based criteria
predict more zero reductions, but with the drawback of introducing a more restricted
reduction process called ``\sreduction{}'' (see Definition~\ref{def:sreduce}). 
In Section~\ref{sec:improve-rewritten} we explain how the Rewritten criterion
can be improved. Following that we show that the Rewritten criterion includes the
Chain criterion and we give a vivid presentation of this fact in
Example~\ref{ex:rew-stronger-than-chain-crit}.
Section~\ref{sec:covers-prod-crit} discusses why the Product
criterion is, in general, not completely covered by signature-based criteria.
Proving that we can safely use the Product criterion in \rba{} enables us to
present an optimized variant which introduces this criterion without
overhead.
As we see in Section~\ref{sec:exp-results} situations where
syzygies coming from the Product criterion improve computations are very rare.
Even more, we give a conjecture that for a specific module
monomial order on the signatures all syzygies coming from the Product criterion
are already known in \rba{}. 

Dear reader, please note that in here is no intention to introduce a new, more
efficient variant of already known signature-based \grobner{} basis algorithms.
This paper should be understood as a first step in understanding the connections
between different attempts to minimize the number of useless data in \grobner{}
basis computations.
This might lead to a better exploitation of, until now, unused underlying
algebraic structures in order to improve computations even further.

\section{Notations}
In this section we introduce notations and basic terminology used in this
publication. Readers familiar with signature-based algorithms
might skip this section. We extend the notation introduced in~\cite{erF5SB2013}.

Let $\ring$ be a polynomial ring over a field $\field$. All polynomials
$f\in \ring$ can be uniquely written as a finite sum $f=\sum_{\const_v x^v\in
  \mon}\const_v x^v$ where $\const_v \in\field$, $x^v\defeq\prod_ix_i^{v_i}$ and $\mon$ is
minimal. The elements of $\mon$ are the \emph{terms} of $f$. The \emph{support of $f$}
is defined by $\support f:=\left\{ \textrm{terms in $f$} \right\}$. A
\emph{monomial} is a polynomial with exactly one term. A monomial with
a coefficient of 1 is \emph{monic}. Neither monomials nor terms of
polynomials are necessarily monic. We write $f\simeq g$ for $f,g\in \ring$
if there exists a non-zero $\const \in\field$ such that $f=\const g$.

Let $\module$ be a free $\ring$-module and let $\mgen$ be
the canonical basis of unit vectors in $\module$.
$\alpha\in \module$ can be uniquely written as a finite sum
$\alpha=\sum_{a\mbasis i\in \modmon}a\mbasis i$ where the $a$ are monomials
and $\modmon$ is minimal. The elements of $\modmon$ are the \emph{terms} of
$\alpha$. A \emph{module monomial} is an element of $\module$ with exactly
one term. A module monomial with a coefficient of 1 is
\emph{monic}. Neither module monomials nor terms of module elements
are necessarily monic. Let $\alpha\simeq\beta$ for
$\alpha,\beta\in \module$ if $\alpha=\const \beta$ for some non-zero $\const \in\field$.

Let $\leq$ denote two different orders -- one for $\ring$ and one for
$\module$:
The order for $\ring$ is a monomial order, which means that it is a
well-order on the set of monomials in $\ring$ such that $a\leq b$ implies
$ca\leq cb$ for all monomials $a,b,c\in \ring$.
The order for $\module$ is a module monomial order which means that it
is a well-order on the set
of module monomials in $\module$ such that $S \leq T$ implies $cS\leq cT$
for all module monomials $S,T\in \module$ and monomials $c\in \ring$.
We require the two orders to be \emph{compatible} in the sense that $a\leq b$ if
and only if $a\mbasis i\leq b\mbasis i$ for all monomials $a,b\in \ring$
and $i=1,\ldots,m$.
Consider a finite sequence of polynomials $\gen f \in \ring$ called the
\emph{input (polynomials)}. We call $\gen f$ a regular sequence if
$f_i$ is a non-zero-divisor on $\ring / \ideal{f_1,\ldots,f_{i-1}}$ for
$i=2,\ldots,m$.
We define the homomorphism
$\alpha\mapsto\proj\alpha$ from $\module$ to $\ring$ by
$\proj\alpha\defeq\sum_{i=1}^m\alpha_if_i.$ An element $\alpha\in \module$
with $\proj\alpha=0$ is called a \emph{syzygy}. The module of all syzygies of
$\gen f$ is denoted by $\syz{\gen f}$.

The following compatible module monomial orders are commonly used in
sig\-na\-ture-based \grobner{} basis algorithms.
\begin{definition}
\label{def:module-monomial-order}
Let $<$ be a monomial order on $\ring$ and let $a e_i, b e_j$ be two module
monomials in $\module$.
\begin{enumerate}
\item $a e_i \potl b e_j$ iff $i < j$ or $i = j$ and $a < b$.
\item $a e_i \topl b e_j$ iff $a < b$ or $a = b$ and $i < j$.
\end{enumerate}
These two orders can be combined by either a weighted degree or a weighted
leading monomial:
\begin{enumerate}[resume]
\item $a e_i \dpotl b e_j$ iff $\deg\left(\proj{a e_i}\right)
< \deg\left(\proj{b e_j}\right)$ or
$\deg\left(\proj{a e_i}\right) = \deg\left(\proj{b e_j}\right)$ and $a\mbasis i
\potl b \mbasis j$. Similar we define $a \mbasis i \dtopl b \mbasis j$.
\item $a e_i \hdpotl b e_j$ iff $\hdp{a e_i} < \hdp{b e_j}$ or
$\hdp{a e_i} = \hdp{b e_j}$ and $a \mbasis i \potl b \mbasis j$. Similar
we define $a\mbasis i \hdtopl b \mbasis j$.
\end{enumerate}
\end{definition}
Note that some of these orders are not efficient for signature-based
\grobner{} basis computations (see, for example,~\cite{gvwGVW2011}). Here we
concentrate on $\potl$, $\dpotl$ and $\schl$. Note that changing $i<j$ to
$-i<-j$ in the above definition of $\potl$ we receive the module order used
in~\cite{fF52002Corrected} for the \ff{} algorithm.

Next we introduce the notion of signatures. Note the connections and relations
to structures in the plain polynomial setting.

\begin{definition}\
\label{def:signature}
\begin{enumerate}
\item The \emph{lead term} $\hd f$ of $f\in
\ring\setminus\set 0$ is the $\leq$-maximal term of $f$. The \emph{lead
  coefficient} $\lc f$ of $f$ is the coefficient of $\hd f$. For a set $F
  \subset
  \ring$ we define the \emph{lead ideal of $F$} by $L(F) := \ideal{\hd f \mid f
    \in F}.$
\item The \emph{lead term} resp. \emph{signature} $\sig\alpha$ of
$\alpha\in \module\setminus\set 0$ denotes the
$\leq$-maximal term of $\alpha$. If $a\mbasis i=\sig\alpha$ then we call
$\ind{\alpha}\defeq i$ the \emph{index} of $\alpha$. For a set $M \in
  \module$ we define the \emph{lead module of $M$} by $L(M) := \ideal{\hd
    \alpha \mid \alpha
    \in M}.$

\item For $\alpha \in \module$ the \emph{sig-poly pair} of $\alpha$ is 
$\spp\alpha \in \module \times \ring$.
\item $\alpha,\beta\in \module$ are \emph{equal up to
  sig-poly pairs} if $\sig\alpha=\sig{\const \beta}$ and
$\proj\alpha=\proj{\const\beta}$ for some non-zero $\const\in\field$.
Correspondingly, $\alpha,\beta$ are said to be \emph{equal up to
  sig-lead pairs} if $\sig\alpha=\sig{\const\beta}$ and
$\hdp\alpha=\hdp{\const\beta}$ for some non-zero $\const \in\field$.
\end{enumerate}
\end{definition}

Next we introduce the notion of \grobner{} bases.
Let $f \in \ring$ and let $t$ be a term of~$f$. Then we
can \emph{reduce} $t$ by $g \in \ring$ if there exists a monomial $b$
such that $\hd{b g}=t$.
The outcome of the reduction step is $f-bg$ and
$g$ is called the \emph{reducer}.  When $g$ reduces $t$ we also say
that $bg$ reduces $f$. That way $b$ is introduced
implicitly instead of having to repeat the equation $\hd{bg}=t$.

The result of an reduction of $f \in \ring$ is an element $h \in \ring$ that has
been calculated from $f$ by a sequence of reduction steps. Thus, reductions can
always be assumed to be done w.r.t. some finite subset $G \subset \ring$.

Let $I= \langle \gen f \rangle$ be an ideal in $\ring$.
A finite subset $G$ of $\ring$ is a \emph{\grobner{} basis up to degree $d$}
for $I$ if $G \subset I$ and for all $f \in I$ with $\deg(f)\leq d$ $f$ reduces
to zero w.r.t. $G$. $G$ is a \emph{\grobner{} basis} for $I$ if $G$ is a
\grobner{} basis in all degrees.

In the very same way one can define \grobner{} basis with the notion of
standard representations: Let $f \in \ring$ and $G \subset \ring$ finite.
A representation $f = \sum_{i=1}^k m_i g_i$ with monomials $m_i \neq 0$, $g_i
\in G$ pairwise different is called a \emph{standard representation} if
$\max_\leq\left\{\hd{m_ig_i} \mid 1 \leq i \leq k\right\} \leq \hd f$. If for any
$f \in \ideal G$ with $f\neq 0$ $f$ has a standard representation w.r.t. $G$ and
$\leq$ then $G$ is a \grobner{} basis for $\ideal G$. Moreover, note that the
existence of a standard representation does not imply reducibility to zero, see,
for example, Exercise~5.63 in~\cite{bwkGroebnerBases1993}).

Moreover, Buchberger gave an algorithmic description of \grobner{} bases using
the notion of so-called S-polynomials:

Let $f \neq 0,g\neq 0 \in \ring$ and let $\lambda = \lcm\left(\hd f,\hd g\right)$ be the monic
least common multiple of $\hd f$ and $\hd g$. The
\emph{S-polynomial} between $f$ and $g$ is given by
\[\spoly{f}{g} \defeq \frac{\lambda}{\hd f} f - \frac{\lambda}{\hd g}g.\]

\begin{theorem}[Buchberger's criterion]
\label{thm:gb}
Let $I= \langle \gen f \rangle$ be an ideal in $\ring$.
A finite subset $G$ of $\ring$ is a \emph{\grobner{} basis}
for $I$ if $G \subset I$ and for all $f,g \in G$ $\spoly f g$ reduces
to zero w.r.t. $G$.
\end{theorem}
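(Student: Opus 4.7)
The plan is to prove the theorem by establishing the standard-representation characterisation of \grobner{} bases stated just before the theorem, namely that every non-zero $f\in\ideal{G}$ admits a standard representation $f=\sum_i m_i g_i$ with $g_i\in G$ and $\max_i\hd{m_i g_i}\leq\hd f$. From such a characterisation, reducibility to zero of every $f\in I$ follows by a well-founded induction on $\hd f$: any summand $m_i g_i$ attaining the maximum provides a reducer for $\hd f$, producing an element with strictly smaller leading term whose own reducibility to zero is given by the induction hypothesis.

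The argument then proceeds by a double Noetherian descent. Fix $f\in\ideal{G}\setminus\set 0$ and consider the non-empty set of expressions $f=\sum_{i=1}^k m_i g_i$ with monomials $m_i\neq 0$ and $g_i\in G$. For each such expression let $M$ be the $\leq$-maximum of the terms $\hd{m_i g_i}$; by well-foundedness of the monomial order I may select a representation minimising $M$, and then among those one minimising the number $N$ of indices with $\hd{m_i g_i}=M$. If $M=\hd f$ I have a standard representation and am done. Otherwise there must be cancellation at $M$, forcing $N\geq 2$.

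The core of the proof is the rewriting of two colliding summands through their S-polynomial. Suppose $\hd{m_1 g_1}=\hd{m_2 g_2}=M$ and set $\lambda=\lcm(\hd{g_1},\hd{g_2})$; then $M$ is a monomial multiple $\mu\lambda$, and a direct coefficient calculation produces an identity $m_1 g_1+m_2 g_2 = c\,\mu\,\spoly{g_1}{g_2} + r$, where $c$ is a non-zero constant and $r$ is a sum of monomial multiples of $g_1,g_2$ whose leading terms are strictly below $M$. By hypothesis $\spoly{g_1}{g_2}$ reduces to zero, hence admits a standard representation $\sum_j n_j g_j$ with $\hd{n_j g_j}\leq\hd{\spoly{g_1}{g_2}}<\lambda$; multiplying by $\mu$ turns this into an expression whose summands all have leading term strictly below $M$. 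Substituting back into the representation of $f$ yields a new representation in which either $M$ has strictly decreased, or $M$ is unchanged but $N$ has strictly decreased, contradicting the minimality of the chosen representation.

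The principal obstacle is the bookkeeping in this rewriting step: one must verify that the S-polynomial identity exactly annihilates the two top terms $m_1 g_1$ and $m_2 g_2$, that the leftover $r$ together with $\mu\sum_j n_j g_j$ can be merged with the untouched summands $m_3 g_3,\dots,m_k g_k$ into a single representation of $f$, and that the pair $(M,N)$ of this representation is lexicographically strictly smaller than the original one. Once these details are checked, the double descent terminates, every $f\in\ideal G$ acquires a standard representation, and the initial reduction argument yields reducibility to zero, completing the proof.
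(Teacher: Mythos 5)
The paper does not supply a proof of Theorem~\ref{thm:gb}: it is Buchberger's classical criterion, stated as background for the signature-based material that follows, with the reader implicitly referred to standard sources such as~\cite{bwkGroebnerBases1993}. There is therefore no in-paper argument to compare against, and your proof must be judged on its own terms. It is the standard textbook argument and is correct in its overall structure: establish the standard-representation characterisation by a double Noetherian descent on the lexicographic pair $(M,N)$, then pass from standard representations to reducibility to zero by a second induction on leading terms, at each step reducing by a summand that attains the maximum.

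One detail is stated too strongly. In the rewrite $m_1 g_1 + m_2 g_2 = c\,\mu\,\spoly{g_1}{g_2} + r$, the coefficient computation yields $r$ as a scalar multiple of $m_2 g_2$; hence $r$ still has leading monomial $M$ unless the lead coefficients of $m_1 g_1$ and $m_2 g_2$ happen to sum to zero. Your claim that $r$ lies strictly below $M$ is therefore false in general. This does not sink the argument: after the substitution the number of summands whose leading monomial equals $M$ has dropped by at least one, so the pair $(M,N)$ still strictly decreases, which is exactly the secondary parameter your descent invokes. But the invariant that carries the day is the drop in $N$, not the elimination of $r$, and the proof should be phrased accordingly. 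A smaller point worth noting: the theorem as stated quantifies over $f\in I$, while your argument produces standard representations only for $f\in\ideal{G}$; the statement implicitly assumes $I=\ideal{G}$, which is maintained as an invariant in Buchberger's algorithm but is not written into the hypothesis here.
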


Every non-syzygy module element $\alpha\in \module$ has two main
associated characteristics -- the signature $\sig\alpha\in \module$ and
the lead term $\hdp\alpha\in \ring$ of its image $\proj\alpha$.
Lead terms and signatures include a coefficient for mathematical
convenience, though an implementation of a signature-based \grobner{} basis
algorithm working in polynomial rings over fields need not store the signature
coefficients.

In order to keep track of the signatures when reducing corresponding polynomial
data we get a classic polynomial reduction together with a further condition.

\begin{definition}
\label{def:sreduce}
Let $\alpha\in \module$ and let $t$ be a term of~$\proj\alpha$. Then we
can \emph{\sreduce{}} $t$ by $\beta\in \module$ if
\begin{enumerate}
\item\label{cond:preduce} there exists a monomial $b$ such that $\hdp{b\beta}=t$ and
\item\label{cond:sreduce} $\sig{b\beta}\leq\sig\alpha$.
\end{enumerate}
\end{definition}
The outcome of the \sreduction{} step is then $\alpha-b\beta$ and
$\beta$ is called the \emph{\sreducer{}}.  When $\beta$ \sreduces{} $t$ we also say for convenience
that $b\beta$ \sreduces{} $\alpha$. That way $b$ is introduced
implicitly instead of having to repeat the equation $\hdp{b\beta}=t$.

\begin{remark}
Note that Condition~(\ref{cond:preduce}) from Definition~\ref{def:sreduce} defines
a classic polynomial reduction step. It implies that $\hdp{b\beta} \leq \hdp{\alpha}$.
Moreover, Condition~(\ref{cond:sreduce}) lifts the above implication to $\module$ so that it
involves signatures. Since we are interested in computing \grobner{} bases in
$\ring$ one
can interpret an \sreduction{} of $\alpha$ by $\beta$ as classic polynomial
reduction of $\proj\alpha$ by $\proj\beta$ together with
Condition~(\ref{cond:sreduce}). Thus an \sreduction{} represents a connection
between data in $\ring$ and corresponding data in $\module$ when a polynomial reduction
takes place.
\end{remark}

Just as for classic polynomial reduction, if
$\hdp{b\beta}\simeq\hdp\alpha$ then the \sreduction{} step is a
\emph{top \sreduction{} step} and otherwise it is a \emph{tail
\sreduction{} step}. Analogously we define the distinction for
signatures: If $\sig{b\beta}\simeq\sig\alpha$ then the reduction
step is a \emph{singular \sreduction{} step} and otherwise it is a
\emph{regular \sreduction{} step}.

The result of an \sreduction{} of $\alpha\in \module$ is $\gamma\in \module$
that has been calculated from $\alpha$ through a sequence of
\sreduction{} steps such that $\gamma$ cannot be further
\sreduced{}. The reduction is a \emph{tail \sreduction} if only tail
\sreduction{} steps are allowed and it is a \emph{top \sreduction{}}
if only top \sreduction{} steps are allowed. The reduction is a
\emph{regular \sreduction} if only regular \sreduction{} steps are
allowed. $\alpha\in \module$ is \emph{\sreducible{}} if
it can be \sreduced{}.

If $\alpha$ \sreduces{} to $\gamma$ and
$\gamma$ is a syzygy then we say that $\alpha$ \emph{\sreduces{} to
zero} even if $\gamma\neq 0$.

Note that analogously to the classic polynomial reduction \sreduction{}
is always with respect to a finite \emph{basis} $\basis\subset
\module$. The \sreducers{} in \sreduction{} are chosen from the basis
$\basis$.

\begin{definition}
\label{def:sgb}
Let $I= \langle \gen f \rangle$ be an ideal in $\ring$.
A finite subset $\basis{} \subset \module$ is a \emph{signature \grobner{} basis in signature $T$} for $I$ if
all $\alpha\in \module$ with $\sig\alpha=T$ \sreduce{} to zero w.r.t.
\basis{}.
\basis{}
is a \emph{signature \grobner{} basis up to signature $T$} for $I$ if \basis{}
is a signature \grobner{} basis in all signatures $S$ such that
$S<T$.
\basis{} is a \emph{signature \grobner{} basis} for $I$ if it is a
signature \grobner{} basis for $I$ in all signatures. We denote $\proj\basis :=
\left\{\proj\alpha \mid \alpha \in \basis\right\} \subset \ring$.
\end{definition}

\begin{lemma}
Let $I= \langle \gen f \rangle$ be an ideal in $\ring$.
If \basis{} is a signature \grobner{} basis for $I$ then
$\proj\basis$ is a \grobner{} basis for $I$.
\end{lemma}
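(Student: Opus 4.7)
The plan is to lift the classical notion of reduction via the projection $\alpha \mapsto \proj\alpha$, exploiting Condition~(\ref{cond:preduce}) in Definition~\ref{def:sreduce}, which says that an \sreduction{} step is in particular a classic polynomial reduction step on the projected data.

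First I would verify the trivial containment $\proj\basis \subset I$: for any $\beta \in \basis \subset \module$, by definition $\proj\beta = \sum_{i=1}^m \beta_i f_i \in \ideal{\gen f} = I$. Next, given an arbitrary nonzero $f \in I$, I want to show that $f$ reduces to zero with respect to $\proj\basis$ in the classical sense. Writing $f = \sum_{i=1}^m a_i f_i$ for some $a_i \in \ring$, I set $\alpha \defeq \sum_{i=1}^m a_i \mbasis i \in \module$, so that $\proj\alpha = f$. This $\alpha$ has some signature $\sig\alpha = T$.

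Since \basis{} is a signature \grobner{} basis for $I$, it is in particular a signature \grobner{} basis in signature $T$, so by Definition~\ref{def:sgb} the element $\alpha$ \sreduces{} to zero with respect to \basis{}. That is, there is a finite sequence
\[\alpha = \alpha_0 \to \alpha_1 \to \cdots \to \alpha_k = \gamma,\]
where $\alpha_{j+1} = \alpha_j - b_j \beta_j$ with $\beta_j \in \basis$, $b_j$ a monomial, $\hdp{b_j \beta_j}$ a term of $\proj{\alpha_j}$, and $\gamma$ a syzygy (in particular $\proj\gamma = 0$).

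The key step is to project this sequence into $\ring$. Applying the homomorphism $\alpha\mapsto\proj\alpha$ termwise we obtain
\[f = \proj{\alpha_0} \to \proj{\alpha_1} \to \cdots \to \proj{\alpha_k} = \proj\gamma = 0,\]
where each arrow is the equation $\proj{\alpha_{j+1}} = \proj{\alpha_j} - b_j \proj{\beta_j}$. Since $\hdp{b_j\beta_j}$ is by assumption an actual term of $\proj{\alpha_j}$, in particular $\proj{\beta_j}\neq 0$ and $\hd{b_j \proj{\beta_j}}$ equals a term of $\proj{\alpha_j}$, so each such step is exactly a classical polynomial reduction step of $\proj{\alpha_j}$ by $\proj{\beta_j} \in \proj\basis$. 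Hence $f$ reduces to $0$ with respect to $\proj\basis$, proving that $\proj\basis$ is a \grobner{} basis for $I$.

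The only delicate point is the one just highlighted: one must make sure that each \sreduction{} step really does descend to a nontrivial classical reduction step in $\ring$ and does not collapse (e.g.\ use a reducer $\beta_j$ with $\proj{\beta_j} = 0$). This is however guaranteed by Condition~(\ref{cond:preduce}) of Definition~\ref{def:sreduce}, which forces $\hdp{b_j \beta_j}$ to be a \emph{term} (hence nonzero) of $\proj{\alpha_j}$. The signature Condition~(\ref{cond:sreduce}) plays no role in the projected reduction and can simply be forgotten once we work in $\ring$.
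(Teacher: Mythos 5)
Your proof is correct and is the natural direct argument: the paper itself gives no proof of this lemma (it simply cites Section~2.2 of~\cite{rs-2012}), and your lift-and-project reasoning is exactly the argument one expects there. You correctly isolate the only thing that needs checking, namely that Condition~(\ref{cond:preduce}) of Definition~\ref{def:sreduce} guarantees each \sreduction{} step descends under $\alpha\mapsto\proj\alpha$ to a genuine classical reduction step, while Condition~(\ref{cond:sreduce}) may be discarded after projecting; together with the observation that ``\sreduces{} to zero'' means the terminal $\gamma$ satisfies $\proj\gamma=0$, this matches the paper's definition of \grobner{} basis (every $f\in I$ reduces to zero w.r.t.\ $\proj\basis$, and $\proj\basis\subset I$).
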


\begin{proof}
For example, see Section~$2.2$ in \cite{rs-2012}.
\end{proof}

\begin{convention}
In the following, when denoting $\basis{}\subset\module$ ``a signature
\grobner{} basis (up to signature $T$)'' we always mean ``a signature
\grobner{} basis (up to signature $T$) for $I=\langle \gen f \rangle$''. We omit
the explicit notion of the input ideal whenever it is clear from the context.
The same holds for classic \grobner{} bases $G \subset \ring$.
\end{convention}

As in the classic polynomial setting we want to give an algorithmic description
of signature \grobner{} bases. For this we introduce the notion of S-pairs.

\begin{definition}\
\begin{enumerate}
\item Let $\alpha,\beta \in \module$ such that $\proj\alpha \neq 0$, $\proj\beta
\neq 0$ and let the monic least common multiple
of $\hdp\alpha$ and $\hdp\beta$ be
$\lambda=\lcm\left(\hdp\alpha,\hdp\beta\right)$. The \emph{S-pair} between $\alpha$ and
$\beta$ is given by
\[\spair\alpha\beta \defeq \frac{\lambda}{\hdp\alpha}\alpha -
\frac{\lambda}{\hdp\beta}\beta.\]
\item $\spair\alpha\beta$ is \emph{singular} if
$\sig{\frac{\lambda}{\hdp\alpha}\alpha} \simeq \sig{\frac{\lambda}{\hdp\beta}\beta}$.
Otherwise it is \emph{regular}.
\end{enumerate}
\end{definition}

Note that $\spair\alpha\beta \in \module$ and $\proj{\spair\alpha\beta} =
\spoly{\proj\alpha}{\proj\beta}$.

\begin{theorem}
\label{thm:spairs}
Let $T$ be a module monomial of $\module$ and let $\basis{}\subset \module$ be a
finite basis. Assume that all regular S-pairs $\spair\alpha\beta$ with
$\alpha,\beta\in\basis$ and $\sig{\spair\alpha\beta} < T$ \sreduce{} to zero and all
$\mbasis i$ with $\mbasis i<T$ \sreduce{} to zero. Then $\basis$ is a
signature \grobner{} basis up to signature $T$.
\end{theorem}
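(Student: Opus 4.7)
The plan is a Noetherian induction on the well-ordered module monomial order: for every signature $S < T$ I show that every $\alpha \in \module$ with $\sig\alpha = S$ \sreduces{} to zero w.r.t. $\basis$. In the inductive step I assume that every $\alpha'$ with $\sig{\alpha'} < S$ already \sreduces{} to zero and pick a counterexample at signature $S$ having minimal $\hdp\alpha$. If $\proj\alpha = 0$ the zero-length \sreduction{} settles the claim; if $\alpha$ is \sreducible{} by some $b\beta$ with $\beta\in\basis$, one \sreduction{} step yields $\alpha'$ with $\sig{\alpha'}\leq S$ and $\hdp{\alpha'} < \hdp\alpha$, so either the outer induction (when $\sig{\alpha'}<S$) or the inner minimality of $\hdp\alpha$ (when $\sig{\alpha'}=S$) makes $\alpha$ \sreduce{} to zero. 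It therefore suffices to rule out the case in which $\proj\alpha \neq 0$ and $\alpha$ is not \sreducible{} by any element of $\basis$.

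For this remaining case, write $\sig\alpha = a\mbasis i$ with $i = \ind\alpha$; since $\mbasis i \leq a\mbasis i = S < T$, the second hypothesis gives an \sreduction{} chain taking $\mbasis i$ to a syzygy, i.e.\ a decomposition $\mbasis i = \sigma + \sum_j b_j \beta_j$ with $\beta_j\in\basis$, $\proj\sigma = 0$ and all $\sig{b_j\beta_j} \leq \mbasis i$. Lifting by $a$ and substituting into the expansion $\alpha = c\cdot a\mbasis i + \alpha_1$ (with $c\in\field^*$ and $\sig{\alpha_1} < S$) expresses $\alpha$, modulo a syzygy of signature $\leq S$, as a $\basis$-linear combination $\sum_j (c a b_j)\beta_j + \alpha_1$ in which every summand has signature $\leq S$. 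Collecting the summands of exact signature $S$ and invoking non-\sreducibility{} of $\alpha$ forces their top projections $\hdp{a_k\beta_k}$ to cancel pairwise; each such cancellation exhibits a regular S-pair $\spair{\beta_k}{\beta_\ell}$ of signature strictly below $S$, which by the first hypothesis \sreduces{} to zero. Splicing these chains back into the decomposition should then produce an actual \sreducer{} for $\alpha$ in $\basis$, contradicting non-\sreducibility{}.

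The technical heart of the proof is this final cancellation-and-splicing step: ensuring that each pair $(\beta_k,\beta_\ell)$ emerging from the cancellation is indeed regular (so the S-pair hypothesis applies rather than being excluded as singular), that its S-pair signature is genuinely below $S$ (so the hypothesis is invoked at a strictly lower signature), and that the returned \sreduction{} chains can be grafted into a legitimate \sreduction{} of $\alpha$ without violating the signature condition at any intermediate step. This is the signature-theoretic analogue of Buchberger's classical S-polynomial argument, and once it is carried out with care the induction closes and the theorem follows.
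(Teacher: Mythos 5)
The paper itself does not spell out a proof of Theorem~\ref{thm:spairs}; it cites Theorem~2 of \cite{rs-ext-2012}. So I am assessing your sketch on its own terms.

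Your outer skeleton -- Noetherian induction on the signature $S<T$, with an inner induction on $\hdp\alpha$ to reduce to the case that $\proj\alpha\neq 0$ and $\alpha$ is not top \sreducible{} -- is the right frame, and expanding $\mbasis i$ through its hypothesized \sreduction{} to zero is a sensible way to manufacture a $\basis$-representation. However, the central cancellation step as you state it does not work, for two related reasons. First, you collect the summands \emph{of exact signature $S$} and claim that non-\sreducibility{} forces \emph{their} $\hdp$'s to cancel. That is the wrong slice: non-\sreducibility{} only tells you that no summand $m_k\beta_k$ with $\sig{m_k\beta_k}\leq S$ has $\hdp{m_k\beta_k}=\hdp\alpha$, hence that the maximum $\mu:=\max_k\hdp{m_k\beta_k}$ exceeds $\hdp\alpha$ and the cancellation happens among the \emph{summands achieving this maximal $\hdp$} -- exactly as in Buchberger's classical argument -- regardless of what their signatures are. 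Nothing forces the exact-signature-$S$ summands to have matching, let alone canceling, lead terms. Second, and more seriously, if two summands $m_k\beta_k$ and $m_\ell\beta_\ell$ \emph{do} happen to have both $\sig{m_k\beta_k}\simeq\sig{m_\ell\beta_\ell}$ and $\hdp{m_k\beta_k}=\hdp{m_\ell\beta_\ell}$, then writing $m_k=u\lambda_k$, $m_\ell=u\lambda_\ell$ with $\lambda_k,\lambda_\ell$ the cofactors of the $\lcm$ shows that $\lambda_k\sig{\beta_k}\simeq\lambda_\ell\sig{\beta_\ell}$: the S-pair $\spair{\beta_k}{\beta_\ell}$ is \emph{singular}, not regular. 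So the hypothesis you want to invoke is unavailable precisely for every pair the exact-signature-$S$ grouping would produce. You flag regularity as something to be ``ensured'' in the technical heart, but with this grouping it is not a detail to verify -- it fails systematically, and the argument needs to be reorganized around the maximal-$\hdp$ slice, where a pair may be regular or singular and the singular subcase requires its own treatment (typically by observing that the resulting combination has strictly smaller signature so the outer induction hypothesis can be applied).

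Two smaller points. You leave $\alpha_1$ as a black box, but $\hdp{\alpha_1}$ can equal or exceed $\hdp\alpha$, so $\alpha_1$ must also be expanded into $\basis$-elements and a syzygy using the outer induction hypothesis before the $\hdp$-bookkeeping is meaningful. And the S-pair signature one obtains need only satisfy $\sig{\spair{\beta_k}{\beta_\ell}}\leq S<T$; insisting that it be ``strictly below $S$'' is both unnecessary (the first hypothesis only requires $<T$) and not guaranteed.
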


\begin{proof}
For example, see Theorem~2 in \cite{rs-ext-2012}.
\end{proof}

Note the similarity of Theorem~\ref{thm:spairs} and Theorem~\ref{thm:gb}.
The outcome of classic polynomial reduction depends on the choice of
reducer which can change what the intermediate
bases are in the classic Buchberger algorithm. Lemma \ref{lem:p3}
implies that all S-pairs with the same signature yield the same
regular \sreduced{} result as long as we process S-pairs in order of
increasing signature.

\begin{lemma}
\label{lem:p3}
Let $\alpha,\beta\in \module$ and let \basis{} be a signature \grobner{}
basis up to signature $\sig\alpha=\sig\beta$. If $\alpha$ and $\beta$
are both regular top \sreduced{} then $\hdp\alpha=\hdp\beta$ or
$\proj\alpha=\proj\beta=0$. Moreover, if $\alpha$ and $\beta$ are both regular
\sreduced{} then $\proj\alpha=\proj\beta$.
\end{lemma}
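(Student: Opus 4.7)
I would consider $\gamma \defeq \alpha - \beta$ and exploit the signature \grobner{} basis hypothesis on $\basis$ in signatures strictly below $\sig\alpha$. Since $\sig\alpha = \sig\beta$ as module terms, the leading module terms of $\alpha$ and $\beta$ cancel, so either $\gamma = 0$ (trivially settling both assertions) or $\sig\gamma < \sig\alpha$; in the latter case Definition~\ref{def:sgb} guarantees that $\gamma$ s-reduces to zero with respect to $\basis$. The technical core of the proof is the following observation, which I would establish first: whenever $\proj\gamma \neq 0$, there exist $\delta \in \basis$ and a monomial $b$ such that $\hdp{b\delta} = \hdp\gamma$ and $\sig{b\delta} \leq \sig\gamma$. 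The idea is that tail s-reduction steps never erase the current lead polynomial term, while singular s-reduction steps only decrease the signature; hence any s-reduction chain of $\gamma$ down to a syzygy must at some intermediate stage top s-reduce $\hdp\gamma$ directly, and the signature bound on the reducer is inherited from the bound on the intermediate form.

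For the first statement I would split on whether $\proj\gamma = 0$. If $\proj\gamma = 0$ then $\proj\alpha = \proj\beta$, settling both sub-cases simultaneously. Otherwise, examining $\proj\gamma = \proj\alpha - \proj\beta$ yields three possibilities: $\hdp\gamma \simeq \hdp\alpha$, $\hdp\gamma \simeq \hdp\beta$, or $\hdp\gamma$ is strictly smaller than both $\hdp\alpha$ and $\hdp\beta$. The last possibility forces $\hdp\alpha$ and $\hdp\beta$ to cancel exactly, i.e.\ $\hdp\alpha = \hdp\beta$. In the first two, rescaling $b$ turns $b\delta$ into a regular top s-reducer of $\alpha$ or of $\beta$ (its signature stays bounded by $\sig\gamma < \sig\alpha$), contradicting the regular top s-reducedness hypothesis. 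The edge case where exactly one of $\proj\alpha, \proj\beta$ vanishes is ruled out by the same contradiction against the nonzero one.

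For the second statement, the first part reduces us to the situation $\hdp\alpha = \hdp\beta$ with both projections nonzero. Then the leading polynomial terms fully cancel in $\gamma$, so $\hdp\gamma < \hdp\alpha$ whenever $\proj\gamma \neq 0$. Re-running the core observation produces a $b\delta$ whose $\hdp{b\delta}$ matches (possibly after combining) a tail term of $\alpha$ or of $\beta$, providing a regular \emph{tail} s-reducer of that element and contradicting full regular s-reducedness. Hence $\proj\gamma = 0$ and $\proj\alpha = \proj\beta$. The main obstacle is the auxiliary observation extracting a direct top s-reducer of $\hdp\gamma$ from the mere fact that $\gamma$ s-reduces to zero while preserving the signature bound; executing this cleanly demands carefully tracking how general s-reduction steps act on both the current lead term and the current signature of the intermediate forms of $\gamma$.
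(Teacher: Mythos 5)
Your proposal is correct and follows the standard argument; the paper itself supplies no proof but merely cites Lemma~3 of Roune--Stillman (\cite{rs-ext-2012}), whose proof rests on exactly the subtraction-plus-contradiction device you use. Two small remarks on the write-up. First, the auxiliary observation you flag as the ``main obstacle'' is in fact immediate: along an \sreduction{} chain $\gamma=\gamma_0\to\gamma_1\to\cdots\to\gamma_k$ with $\proj{\gamma_k}=0$, every step satisfies $\sig{\gamma_{i+1}}\leq\sig{\gamma_i}$ and the lead polynomial term of $\proj{\gamma_i}$ changes only at a \emph{top} step, so the first top step in the chain reduces $\hdp\gamma$ by a $b\delta$ with $\delta\in\basis$, $\hdp{b\delta}\simeq\hdp\gamma$, and $\sig{b\delta}\leq\sig\gamma$; no further ``tracking'' is required. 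Second, your phrasing ``tail s-reduction steps never erase the lead term, while singular s-reduction steps only decrease the signature'' treats tail and singular as if they were alternatives, whereas they are orthogonal attributes of a step (a step is top or tail, and independently regular or singular); only the top/tail dichotomy matters for the lead-term argument, and the harmless bound $\sig{\gamma_i}\leq\sig\gamma$ is what one uses for the signature. Neither issue affects the correctness of the proof, and the overall decomposition --- reduce to $\gamma=\alpha-\beta$, invoke the signature \grobner{} basis hypothesis below $\sig\alpha$ to get an \sreduction{} of $\gamma$ to zero, extract a regular \sreducer{} of $\alpha$ or $\beta$, contradict (top) \sreduced{}ness --- is exactly the argument the paper defers to.
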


\begin{proof}
For example, see Lemma~3 in \cite{rs-ext-2012}.
\end{proof}

We simplify notations using facts from the previous statements.

\begin{notation}\
\label{conv:spairs}
\begin{enumerate}
\item Due to Lemma~\ref{lem:p3} we assume in the following that $\basis$ always
denotes a finite subset of $\module$ with the
property that for $\alpha, \beta \in
\basis$ with $\sig\alpha\simeq\sig\beta$ it follows that
$\alpha=\beta$.
\label{conv:spairs:basis}
\item Theorem~\ref{thm:spairs} suggests to consider only regular S-pairs for the
computation of signature \grobner{} bases. Thus in the following ``S-pair''
always refers to ``regular S-pair''.\label{conv:spairs:item}
\end{enumerate}
\end{notation}
\section{Detecting zero reductions}
\label{sec:detection}
Different criteria to predict useless data during
the computation of a \grobner{} basis exist.
One such attempt goes back to Buchberger and implements two criteria.

\begin{lemma}[Product criterion~\cite{bGroebner1965,bGroebnerCriterion1979}]
Let $f,g \in \ring$ with $\lcm\left(\hd f,\hd g\right) = \hd f \hd g$.
Then $\spoly f g$ reduces to zero w.r.t.
$\left\{f,g\right\}$.
\label{lem:prod-crit}
\end{lemma}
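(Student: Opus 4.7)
My plan is to give a direct inductive reduction. After reducing to the monic case by rescaling $f$ and $g$ (which affects neither the hypothesis nor the conclusion), write $M = \hd f$, $N = \hd g$, and decompose $f = M + f'$, $g = N + g'$ with $f' = f - M$, $g' = g - N$. A one-line expansion gives
\[
  \spoly{f}{g} \;=\; Nf - Mg \;=\; Nf' - Mg' \;=\; f' g - g' f,
\]
exhibiting $\spoly{f}{g}$ as an explicit $\set{f,g}$-combination whose two summands $f'g$ and $-g'f$ have leading terms $\hd{f'}\cdot N$ and $\hd{g'}\cdot M$ respectively, both strictly below $\lambda = MN$.

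The key observation, and the only place the coprimality hypothesis enters, is that these two leading terms cannot coincide when both are nonzero. Indeed $\hd{f'}\cdot N$ is divisible by $N$ but, since $\hd{f'} < M$ forces $M \nmid \hd{f'}$ and $\gcd(M,N) = 1$, not by $M$; symmetrically $\hd{g'}\cdot M$ is divisible by $M$ but not by $N$. Consequently $\hd{\spoly{f}{g}} = \max\bigl(\hd{f'}\cdot N,\; \hd{g'}\cdot M\bigr)$, and is reducible by exactly one of $f$ or $g$, whichever matches its divisor. The same argument applies, more generally, to any pair $a,b$ with $\hd a < M$ and $\hd b < N$.

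I would then prove by Noetherian induction on the lead monomial the auxiliary claim: any polynomial of the form $p = ag - bf$ with $\hd a < M$, $\hd b < N$ (allowing $a = 0$ or $b = 0$) reduces to zero with respect to $\set{f,g}$. The base case $a = b = 0$ is trivial. For the step, assume w.l.o.g.\ $\hd{ag} > \hd{bf}$, so $\hd p = \hd a \cdot N$; the reduction step subtracting $\hd a \cdot g$ yields $p' = (a - \hd a) g - bf$, which still satisfies the hypotheses (since $\hd{a - \hd a} < \hd a < M$ and $\hd b$ is unchanged) and has strictly smaller lead monomial, so induction applies. Applying the claim to $a = f'$, $b = g'$ then yields the Product criterion. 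The only conceptually delicate point is the non-cancellation observation; everything else is routine bookkeeping over the monomial order.
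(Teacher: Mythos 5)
The paper states this Product criterion only with a citation to Buchberger and does not supply a proof, so there is no in-text argument to measure yours against; your proof is the classical one and it is correct. The identity $\spoly{f}{g} = f'g - g'f$ with $f' = f-\hd f$ and $g' = g-\hd g$ is precisely the representation the paper leans on implicitly when it proves the signature variant Corollary~\ref{cor:sig-prod-crit}, where the reducing multipliers range over $\support{\proj\beta - \hdp\beta}$ and $\support{\proj\alpha - \hdp\alpha}$ -- the terms of $g'$ and $f'$ in the polynomial picture. The point you rightly isolate as the crux, that coprimality of the lead monomials forbids $\hd{f'}\cdot\hd g$ and $\hd{g'}\cdot\hd f$ from colliding, is what guarantees that every intermediate remainder $ag-bf$ with $\hd a < \hd f$, $\hd b < \hd g$ has its lead term divisible by exactly one of $\hd f$, $\hd g$, so there is always a unique forced reducer; your Noetherian induction on $\hd p$ then terminates because the monomial order is a well-order. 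A marginally lighter bookkeeping is to induct on the total number of terms of $a$ and $b$, since each top-reduction step deletes exactly one such term, which sidesteps the appeal to well-foundedness, but both variants are sound. There is no gap.
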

In the above situation we also say that the S-polynomial \emph{$\spoly f g$ fulfills the
Product criterion}.

\begin{lemma}[Chain criterion~\cite{kollBuchberger1978,bGroebnerCriterion1979}]
Let $f,g,h \in \ring$, $G \subset \ring$ finite. If $\hd h \mid \lcm\left(\hd f,
\hd g\right)$, and if $\spoly f h$ and $\spoly h g$ have a standard representation
w.r.t. $G$ resp., then $\spoly f g$ has a standard representation w.r.t. $G$.
\label{lem:chain-crit}
\end{lemma}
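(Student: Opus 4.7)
The plan is to exhibit $\spoly{f}{g}$ explicitly as a $\ring$-linear combination of $\spoly{f}{h}$ and $\spoly{h}{g}$ with monomial coefficients, and then substitute in the given standard representations. First I would set $\lambda_{fg} = \lcm(\hd f, \hd g)$, $\lambda_{fh} = \lcm(\hd f, \hd h)$, $\lambda_{hg} = \lcm(\hd h, \hd g)$, and observe that the hypothesis $\hd h \mid \lambda_{fg}$, together with $\hd f, \hd g \mid \lambda_{fg}$, forces $\lambda_{fh} \mid \lambda_{fg}$ and $\lambda_{hg} \mid \lambda_{fg}$, so $\lambda_{fg}/\lambda_{fh}$ and $\lambda_{fg}/\lambda_{hg}$ are genuine monomials of $\ring$.

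The key identity to verify by a direct expansion is
$$\spoly{f}{g} \;=\; \frac{\lambda_{fg}}{\lambda_{fh}}\,\spoly{f}{h} \;+\; \frac{\lambda_{fg}}{\lambda_{hg}}\,\spoly{h}{g},$$
the point being that the two $h$-contributions cancel. Plugging in the given standard representations $\spoly{f}{h} = \sum_i m_i g_i$ and $\spoly{h}{g} = \sum_j n_j g_j'$ with $g_i, g_j' \in G$, and distributing the monomial scalars, yields an explicit $G$-combination for $\spoly{f}{g}$. For each summand, $\hd{(\lambda_{fg}/\lambda_{fh}) m_i g_i} \leq (\lambda_{fg}/\lambda_{fh})\,\hd{\spoly{f}{h}}$, and since the leading terms cancel in an S-polynomial one has $\hd{\spoly{f}{h}} < \lambda_{fh}$, so this monomial is strictly below $\lambda_{fg}$. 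The same bound applies to the summands coming from $\spoly{h}{g}$, so the combined representation has all lead monomials strictly below $\lambda_{fg}$.

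The main obstacle I foresee lies in bridging the gap between \emph{max lead monomial} $< \lambda_{fg}$ and the stricter condition $\leq \hd{\spoly{f}{g}}$ built into the definition of a standard representation: cancellations between the two scaled pieces at some intermediate monomial $T$ with $\hd{\spoly{f}{g}} < T < \lambda_{fg}$ could inflate the max lead monomial beyond $\hd{\spoly{f}{g}}$. I would handle this by Noetherian induction on $\leq$ applied to the maximum lead monomial of the current representation: group the summands whose lead monomial equals the current maximum $T$, note that their sum of leading terms must vanish (since $\spoly{f}{g}$ has no term at $T$), and rewrite this vanishing combination using the Gr\"obner property of $G$ below $T$ -- equivalently, using standard representations of strictly smaller S-polynomials that must already exist in the inductive step. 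Each rewriting strictly decreases the maximum lead monomial, and well-orderedness forces termination at a genuine standard representation of $\spoly{f}{g}$ in the sense of the excerpt.
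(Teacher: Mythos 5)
The paper gives no proof of Lemma~\ref{lem:chain-crit}; it cites the classical references and records only the key identity (its Equation~(3)), which is exactly the decomposition you derive, with $u = \lambda_{fg}/\lambda_{fh}$ and $v = \lambda_{fg}/\lambda_{hg}$. Your verification of the identity, the divisibility observations, and the substitution step are all correct, and together they produce a $G$-representation of $\spoly{f}{g}$ in which every summand has lead term strictly below $\lambda_{fg} = \lcm\left(\hd f, \hd g\right)$. That is precisely the form in which the chain criterion is classically stated and used (the ``$t$-representation'' version, cf.\ Becker--Weispfenning--Kredel): a representation bounded strictly below $\lcm\left(\hd f, \hd g\right)$ already suffices for the refined Buchberger criterion and justifies discarding $\spoly f g$, and it follows \emph{directly} from the identity plus substitution, with no induction at all.

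Where your plan runs into trouble is the fourth step, which you introduce to upgrade the bound ``$< \lambda_{fg}$'' to the paper's literal definition of standard representation ``$\leq \hd{\spoly f g}$''. That gap is real, and you are right to notice it, but the proposed Noetherian induction cannot be justified from the lemma's stated hypotheses. Rewriting the cancelling top contributions requires knowing that $G$ rewrites arbitrary lead-term syzygies below $\lambda_{fg}$ -- in effect, that $G$ is already a Gr\"obner basis (or has standard representations for \emph{all} lower S-polynomials), which the lemma never assumes: it only grants two specific standard representations, for $\spoly f h$ and $\spoly h g$. When the tops of the two scaled pieces do cancel (the only case where the bound needs upgrading), the residual top syzygy is an arbitrary lead-term syzygy of $G$ and need not decompose through the two given S-polynomials, so ``standard representations of strictly smaller S-polynomials that must already exist in the inductive step'' is an assumption you are importing, not one you have. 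The correct reading of the chain criterion is the $\lcm$-bounded one that your first three steps already establish unconditionally; the inductive repair should simply be dropped.
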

In the above situation we also say that the triple \emph{$(f,g,h) \in \ring^3$ fulfills the
Chain criterion}.
Another formulation of Lemma~\ref{lem:chain-crit} (also explaining its name) is the relation
\begin{equation}
\spoly f g = u\; \spoly f h + v\; \spoly h g
\label{eq:chain-crit}
\end{equation}
for suitable $u,v \in \mon$. Clearly, if we know the standard representations of
$\spoly f h$ and $\spoly h g$, respectively, $\spoly f g$ does not need to be
considered when computing $G$. Depending on $u$ and $v$ we may even be free to
remove any of the three above mentioned S-polynomials from the \grobner{} basis
computation as long as we guarantee to consider the other two, for more details
on this we refer to~\cite{gmInstallation1988}.

Gebauer and M\"oller present in~\cite{gmInstallation1988} a very efficient
implementation of a combination of the Product and the Chain criterion in a
Buchberger-like algorithm to compute \grobner{} bases.\footnote{Since we assume
that the reader is more familiar with the installation of Gebauer and M\"oller 
than with signature-based \grobner{} basis algorithms we only
present the pseudo code of \rba{} in Algorithm~\ref{alg:rba}.}

In 2002 Faug\`ere published the \ff{} algorithm, the first \grobner{} basis
algorithm using signatures to predict zero reductions. In the last
years many researchers have contributed to optimize and generalize this attempt,
for
example~\cite{huang2010, ggvGGV2010, apF5CritRevised2011, epSig2011, gvwGVW2011,
sw2011b, rs-2012, rs-ext-2012, erF5SB2013, panhuwang2013}.

We can give a general description of signature-based
\grobner{} basis algorithms, denoted \rba{}, similar to Buchberger's algorithm in
Algorithm~\ref{alg:rba}. It depends on the
algorithm~\rewritablenoarg{} which implements the Rewritten
criterion (Lemma~\ref{lem:rew-crit}).

\begin{algorithm}
\begin{algorithmic}[1]
\Require Ideal $I=\langle \gen f \rangle \subset \ring$, monomial
order $\leq$ on $\ring$ and a compatible extension on $\module$, a rewrite order
$\rleq$ on $\basis \cup \hdsyz$
\Ensure Signature \grobner{} basis \basis{} for $I$, \grobner{} basis $\hdsyz$ for $\syz{\gen f}$
\State $\basis{}\gets\emptyset$, $\hdsyz \gets \emptyset$
\State $\pairset\gets\set{\mbasis 1,\ldots,\mbasis m}$
\State $\hdsyz\gets \set{f_i \mbasis j - f_j \mbasis i \mid 1\leq i < j \leq
  m}\subset \module$\label{alg:rba:initialsyz}
\While{$\pairset\neq\emptyset$}
  \State $\beta\gets \text{ element of minimal signature w.r.t. $\leq$ from } \pairset$\label{alg:rba:choosespair}
  \State $\pairset\gets \pairset\setminus\set \beta$
  \If {$\notrewritable\beta$} \label{alg:rba:rewritecheck}
    \State $\gamma\gets$ result of regular \sreducing{}
    $\beta$\label{alg:rba:regular}
    \If {$\proj{\gamma}=0$}
      \State $\hdsyz\gets \hdsyz+\set{\gamma}$\label{alg:rba:addsyz}
    \Else \label{alg:rba:addsingularelement}
      \State $\pairset\gets \pairset\cup\setBuilder
        {\spair\alpha\gamma}
        {\alpha\in\basis{}\text{, $\spair\alpha\gamma$
                                regular}}$\label{alg:rba:pairs}
      \State $\basis{}\gets\basis\cup\set\gamma$\label{alg:rba:basis}
      \State $\updatesyzwitharg$\label{alg:rba:updatesyz}
    \EndIf
  \EndIf
\EndWhile
\State \textbf{return} $(\basis{}, \hdsyz)$
\end{algorithmic}
\caption{\rba{} (Rewrite Basis Algorithm)}
\label{alg:rba}
\end{algorithm}

\begin{algorithm}
\begin{algorithmic}[1]
\Require S-pair $a\alpha - b\beta \in\module$
\Ensure ``true'' if S-pair is rewritable; else ``false''
\If {$a\alpha$ is rewritable}
  \State \textbf{return} true
\EndIf
\If {$b\beta$ is rewritable}
  \State \textbf{return} true
\EndIf
\State \textbf{return} false
\end{algorithmic}
\caption{\rewritablenoarg{} (Rewritten Criterion Check)}
\label{alg:rewritable}
\end{algorithm}

In the following we focus on the rewritability of elements and the generation of
syzygies in order to detect useless zero reductions. We refer
to~\cite{rs-2012,erF5SB2013} for other details
of \rba{}.

Until now $\updatesyz$ in Line~\ref{alg:rba:updatesyz} is just a placeholder
for a generic subalgorithm. In
Section~\ref{sec:improve-rewritten} we make use of $\updatesyz$ in order to
improve the detection of zero reductions.

Next we state the main signature-based criterion to remove useless data.

\begin{lemma}[Rewritten criterion]
For signature $T$ \rba{}
needs to handle exactly one $a\alpha \in \module$ from the set
\begin{equation}
\label{eq:rew-crit}
\mathcal{C}_T = \left\{a\alpha \mid \alpha \in
\basis \cup \hdsyz, a \in \mon \textrm{
and } \sig{a\alpha} = T \right\}.
\end{equation}
\label{lem:rew-crit}
\end{lemma}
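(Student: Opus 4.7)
The plan is to argue by induction on the signature $T$ that the main loop of \rba{} is about to handle. The induction hypothesis is that, at the moment signature $T$ is reached, the current $\basis$ is already a signature \grobner{} basis up to signature $T$ (this is delivered by Theorem~\ref{thm:spairs} together with the fact that all regular S-pairs of strictly smaller signature have been properly processed in earlier iterations), and that $\hdsyz$ collects the syzygies generated so far. Under this hypothesis I want to show that every element of $\mathcal{C}_T$ yields the same reductive outcome, so that processing a single representative suffices.

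Pick two arbitrary elements $a\alpha, b\beta \in \mathcal{C}_T$. The first observation is that, after rescaling by a non-zero element of $\field$, the coefficients at the common signature term $T$ cancel, so $a\alpha - b\beta$ has signature strictly less than $T$. From here I would split into cases. If both $\alpha, \beta \in \basis$, Lemma~\ref{lem:p3} (applied to $\basis$ as a signature \grobner{} basis up to $T$) forces the regular top \sreductions{} of $a\alpha$ and $b\beta$ to share the same lead term or else both to project to zero; in either case the two representatives contribute identical information, either a new element of $\basis$ or a new syzygy for $\hdsyz$. If one of them, say $\beta$, lies in $\hdsyz$, then $b\beta$ is already a syzygy with $\proj{b\beta}=0$ and is vacuously regular \sreduced; invoking Lemma~\ref{lem:p3} again, the regular \sreduction{} of $a\alpha$ is also forced to have zero projection, so $a\alpha$ produces no information beyond the syzygy already known through $b\beta$. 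The case where both $\alpha,\beta \in \hdsyz$ is immediate. Finally, the converse ``at least one'' half follows directly from Theorem~\ref{thm:spairs}: whenever $\mathcal{C}_T$ is non-empty, some representative must actually be reduced in order to extend the signature \grobner{} basis past $T$.

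The main obstacle I anticipate is the mixed case in which one representative already lies in $\hdsyz$. Lemma~\ref{lem:p3} as stated requires both inputs to be regular \sreduced, whereas $b\beta$ enters $\mathcal{C}_T$ as an unreduced multiple of a syzygy. The resolution is to observe that $\proj{b\beta}=0$ leaves nothing to \sreduce, so $b\beta$ is trivially regular \sreduced{} and the lemma goes through. A closely related detail that should be made explicit is that $\mathcal{C}_T$ must be understood with respect to the state of $\basis \cup \hdsyz$ just before signature $T$ is handled, so that the update of $\hdsyz$ in Line~\ref{alg:rba:updatesyz} and the possible insertion of the new element into $\basis$ in Line~\ref{alg:rba:basis} do not circularly feed back into the very set we are analysing.
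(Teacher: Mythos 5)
The paper does not give an explicit proof of this lemma; it states it immediately after pointing out that ``Lemma~\ref{lem:p3} implies that all S-pairs with the same signature yield the same regular \sreduced{} result as long as we process S-pairs in order of increasing signature,'' which is exactly the ingredient your argument hinges on. Your proposal correctly identifies Lemma~\ref{lem:p3} (together with Theorem~\ref{thm:spairs} for the induction and the ``at least one'' direction) as the substance of the claim, and correctly handles the subtlety that a multiple of a syzygy from $\hdsyz$ is vacuously regular \sreduced{} so Lemma~\ref{lem:p3} still applies; the initial remark that $a\alpha-b\beta$ has signature below $T$ is harmless but unused, since that cancellation is internal to the proof of Lemma~\ref{lem:p3} rather than something you need to re-derive here.
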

\vspace*{-5mm}
Usually the Rewritten criterion as stated above is only defined for $\alpha \in
\basis$. The second criterion is then the so-called \emph{\ff{} criterion} or
\emph{Syzygy criterion}. This criterion states that whenever the signature of an
S-pair is a multiple of the lead term of a syzygy in $\module$, then the
pair can be removed. $\alpha \in \hdsyz$ means that $\alpha$ is a
syzygy and its signature is equal to the lead term in $\module$. Clearly,
whenever in a situation such that $\alpha \in \hdsyz$ and
$a\alpha\in\mathcal{C}_T$ we choose $\alpha \in \hdsyz$ in $\mathcal{C}_T$,
since then we do not need to do any computation in signature $T =
\sig{a\alpha}$.

The choice in Lemma~\ref{lem:rew-crit} depends on a rewrite
order $\rleq$: 

\begin{definition}
A \emph{rewrite order $\rleq$} is a total order on $\basis$ such that
$\sig\alpha \mid \sig\beta \Rightarrow \alpha \rleq \beta$.
\label{def:rew-order}
\end{definition}

Thus it makes sense to choose $\max_\rleq \mathcal{C}_T$ in
Lemma~\ref{lem:rew-crit} and remove all other corresponding S-pairs in signature
$T$ during the computations of \rba{}.

\begin{definition}
\label{def:rewriter}
We call \emph{$\delta:=\max_\rleq \mathcal{C}_T$ the
canonical rewriter in signature $T$ w.r.t. $\rleq$}. If $\sig{a\alpha} = T$ but
$a\alpha$ is not the canonical rewriter in $T$ w.r.t. $\rleq$ then we say that
\emph{$a\alpha$ (or $\alpha$) is rewritable (in signature $T$ w.r.t. $\rleq$)}.
If $\delta \in \hdsyz$ then we say that \emph{$a\alpha$ (or $\alpha$) is
rewritable w.r.t. $\hdsyz$}. Analogously, if $\delta \in \basis$, we use the
notation \emph{rewritable w.r.t. $\basis$}. 
\end{definition}

Two different rewrite orders are used widely these days. For both it holds that
$\alpha \rleq \beta$ for $\alpha \in \basis, \beta \in \hdsyz$. 
This condition defines the usually separately defined Syzygy criterion: Whenever
we have a syzygy $\alpha \in \hdsyz$ such that $\sig{a\alpha} \mid T$ for some
signature $T$ and some multiple $a\in \ring$ then $\alpha$ is the canonical
rewriter. Since we already know that $\proj\alpha = 0$ we do not need to compute
anything for signature $T$ and can go on.
To compare elements from $\basis$ one of the following definitions is used:
\begin{enumerate}
\item $\alpha \rleqff \beta$ if
$\alpha$ has been added to \basis{} before $\beta$ is added to \basis{}. Break
ties arbitrarily.
\item\label{def:specific-rewrite-rules:sb}
$\alpha \rleqsb \beta$ if $\sig\alpha \hdp\beta < \sig\beta
\hdp\alpha$ or if $\sig\alpha \hdp\beta = \sig\beta \hdp\alpha$ and $\sig\alpha
< \sig\beta$.
\end{enumerate}
In the following (besides the examples) we only need the general definition of $\rleq$.
Nevertheless, the reader can think of one of the above mentioned,
particular implementations if this appears to be
helpful.

\begin{remark}
\label{rem:sig-poly-pairs}
Note that \rba{} as presented in Algorithm~\ref{alg:rba} computes
not only the signature \grobner{} basis \basis{} for the input ideal $I$, but
also a \grobner{} basis for the corresponding syzygy module \hdsyz{}, similar
to~\cite{mmtSyzygies1992}.
\end{remark}

Since we are only interested in an efficient
computation of $\basis$ it makes sense to optimize \rba{}:
\sreduction{} and the Rewritten criterion depend solely on the correctness
of the signatures, the lead terms of the module representations in
$\module$. Storing only the signature and not the full module element computations
become more efficient (only reducing polynomial
data, signatures are unchanged once generated), but we loose information stored
in full module representation, for example, to generate more syzygies.

In the following \rba{} is always assumed to use sig-poly pairs only.
We want to recover as many (lead terms of) syzygies as possible without
taking on the burden of computations in $\module$. 

Since we use the names of the above mentioned criteria a lot in the following let us agree on
the following shorthand notations.

\begin{notation}
We denote the Product criterion by \prc{}, the Chain criterion by \chc{} and the
Rewritten criterion by \rwc{}.
\end{notation}

\section{Improving RC}
\label{sec:improve-rewritten}
In this section we want to see how \rwc{} depends on the number
of known syzygies \rba{} can use. We show how one can use
$\updatesyz$ to improve the prediction of zero reductions. For this, let us
first take a look at an example using \rba{}.
\begin{example}
\label{ex:prod-crit}
Let $\field$ be the finite field with $7$ elements and let $\ring =
\field[x,y,z,t]$. Let $<$ be the graded reverse lexicographical monomial order
which we extend to $\schl$ on $\ring^4$. Consider the input ideal $I$ generated
by $f_1=yz-z^2$, $f_2=y^2-xt$, $f_3=xy-xz$ and $f_4=x^2-xy$.
We present
the calculations done by \rba{} using $\rleqsb$ in
Figure~\ref{fig:ex:prod-crit}.
\begin{center}
\begin{figure}
\centering
\setlength{\tabcolsep}{0.8em}
\begin{tabular}{cccc}
$\gbasis i\in\basis$ & reduced from & $\overline{\gbasis i}$ & $\sig{\gbasis i}$ \\
\hline
$\gbasis 1$ & $\mbasis 1$ & $yz-z^2$ & $\mbasis 1$\\
$\gbasis 2$ & $\mbasis 2$ & $y^2-xt$ & $\mbasis 2$\\
$\gbasis 3$ & $\mbasis 3$ & $xy-xz$ & $\mbasis 3$\\
$\gbasis 4$ & $\mbasis 4$ & $x^2-xz$ & $\mbasis 4$\\
$\gbasis 5$ & $\spair{\gbasis 2}{\gbasis 1}=z\gbasis 2 - y \gbasis 1$ &
$z^3-xzt$ & $z \mbasis 2$\\
$\gbasis 6$ & $\spair{\gbasis 3}{\gbasis 2}=y \gbasis 3 - x \gbasis 2$ &
$xz^2-xzt$ & $y \mbasis 3$\\
\end{tabular}
\caption{Computations for \rba{} in
  Example~\ref{ex:prod-crit}.}
\label{fig:ex:prod-crit}
\end{figure}
\end{center}
\rba{} computes $5$ zero reductions corresponding to the following syzygies:
\begin{center}
$
\begin{array}{ll}
\sigma_1 = (y+z) \alpha_4 - (x-y) \alpha_3, &\sigma_2 = (y-z) \alpha_5 - (z^2-xt) \alpha_1,\\
\sigma_3 = (y-z) \alpha_6 - (z^2-zt) \alpha_3, &\sigma_4 = (x-y) \alpha_6 -
(z^2-zt) \alpha_4,
\end{array}
$
$
\begin{array}{c}
\sigma_5 = (x^2-xz) \alpha_5 - (z^3-xzt) \alpha_4.
\end{array}
$
\end{center}
\end{example}
Note that \rba{} using $\potl$ or $\dpotl$ computes the same example
with $4$ zero reductions, respectively.
Even \singular{}'s~\cite{singular400} \grobner{} basis implementation (Gebauer-M\"oller installation)
misses only $4$ zero reductions, so we must be able to improve the prediction
for \rba{} by using $\updatesyz$. The idea is to implement $\updatesyz$ in a way
to recover more known syzygies without blowing up $\hdsyz$ too much by possibly
adding redundant elements:

\begin{characteristic}
\label{char:updatesyz}
Possible implementations of $\updatesyz$ are depending on the module monomial
order $<$:
\begin{enumerate}
\item With $\potl$ \rba{} computes $\basis$ by increasing module indices.
Thus, once an element $\gamma$ of index $k$ is added to $\basis$ such that 
$
k > \max\left\{\ind\alpha \mid \alpha \in
\basis\backslash\{\gamma\}\right\}
$
then all new elements have index $k$ and $\basis$ is a signature
\grobner{} basis for the input up to index $k-1$. So we can add all syzygies
$\proj{\alpha}\gamma - \proj\gamma \alpha,\;\alpha \in
\basis\backslash\{\gamma\}$ resp. signatures $\hdp\alpha \gamma$
to $\hdsyz$. In this way, if the input forms a regular sequence, \rba{} using
$\potl$ does not compute any zero reduction.


\item For $\schl$ and $\dpotl$ we implement $\updatesyz$ such that
whenever an element $\gamma$ is added to $\basis$
we add all syzygies
$\proj{\alpha}\gamma - \proj\gamma \alpha,\;\alpha \in
\basis\backslash\{\gamma\}$ resp. signatures $\hdp\alpha \gamma$ to $\hdsyz$
that increase $L\left(\hdsyz\right)$. Note that in this situation we do not need to take care of the
connection between $\ind\gamma$ and $\ind\alpha$ since this time we might have a
non-incremental computation.
\end{enumerate}
\end{characteristic}

Using $\updatesyz$ as explained in Characteristic~\ref{char:updatesyz}
\rba{} behaves way better in Example~\ref{ex:prod-crit}:
For $\potl$ resp. $\dpotl$ we receive $2$ zero reductions.
Since the input is homogeneous the number of zero reductions for $\potl$ and $\dpotl$
coincides.\footnote{Note that for affine input this need not be the case due
to possible degree drops during the computation of \basis{}.}

For $\schl$ we only drop from $5$ to $3$ syzygies: $\sigma_2$ and $\sigma_3$ are
now detected in $\rewritablenoarg$ due to additional elements in $\hdsyz$.

\begin{remark}
Note that one can optimize $\hdsyz$ even further when more algebraic structure
of the input is known. For example, in~\cite{FSS10b}, bihomogeneous input is
investigated. Using this fact one can construct even more syzygies or relations
by taking information about the corresponding Jacobian matrices into account.
\end{remark}

\section{RC covers CC}
\label{sec:covers-chain-crit}
Looking at equations~\ref{eq:chain-crit} and~\ref{eq:rew-crit} it seems that
there is a strong connection between both criteria. In~\cite{gashPhD2008} Gash
shortly discusses the possibility of adding \chc{} to \ff{} in
special situations. Here we show that \rwc{} completely covers
\chc{} and thus need not be added to \rba{} (or \ff{} as a specific
implementation of it).

\begin{theorem}
Let $\alpha,\beta,\gamma\in\module$ such that
$(\proj\alpha,\proj\beta,\proj\gamma)$ fulfills \chc{}.
Then \rwc{} removes one of the corresponding S-pairs
$\spair\alpha\beta$, $\spair\alpha\gamma$ resp. $\spair\gamma\beta$.
\label{thm:rew-chain-crit}
\end{theorem}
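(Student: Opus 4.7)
The natural starting point is the module identity
\[\spair\alpha\beta \;=\; u \spair\alpha\gamma + v \spair\gamma\beta,\]
where $u = \lcm(\hdp\alpha,\hdp\beta)/\lcm(\hdp\alpha,\hdp\gamma)$ and $v = \lcm(\hdp\alpha,\hdp\beta)/\lcm(\hdp\gamma,\hdp\beta)$ are genuine monomials thanks to the Chain hypothesis $\hdp\gamma \mid \lcm(\hdp\alpha,\hdp\beta)$. A direct check shows that the $\gamma$-terms on the right-hand side cancel, so this is an identity in $\module$, not merely in $\ring$. It forces strong relationships among the signatures $T_1 = \sig{\spair\alpha\beta}$, $T_2 = \sig{\spair\alpha\gamma}$, $T_3 = \sig{\spair\gamma\beta}$ and the six term signatures of these S-pairs.

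Next I would split the analysis according to which of the three pairs has the largest signature. Assume WLOG $T_1 \geq T_2, T_3$, so $\spair\alpha\beta$ is processed last, and WLOG that the $\alpha$-term realises $T_1$, i.e.\ $T_1 = \tfrac{\lcm(\hdp\alpha,\hdp\beta)}{\hdp\alpha}\sig\alpha$; the case of a dominant $\beta$-term is handled symmetrically by interchanging $\alpha$ and $\beta$. The key structural input is the divisibility
\[\tfrac{\lcm(\hdp\alpha,\hdp\gamma)}{\hdp\alpha}\sig\alpha \;\mid\; T_1,\]
so any witness of signature dividing the left-hand side also witnesses rewritability at $T_1$. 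I would then show that when $\spair\alpha\gamma$ has been considered (at $T_2 \leq T_1$), such a witness exists: either $\spair\alpha\gamma$ is itself rewritable through its $\alpha$-term, giving a witness $\delta \rgt \alpha$ of signature dividing $\tfrac{\lcm(\hdp\alpha,\hdp\gamma)}{\hdp\alpha}\sig\alpha$; or $\spair\alpha\gamma$ is processed and its $\alpha$-term dominates ($T_2 = \tfrac{\lcm(\hdp\alpha,\hdp\gamma)}{\hdp\alpha}\sig\alpha$), in which case the element added to $\basis\cup\hdsyz$ has signature $T_2$, and the axiom $\sig\alpha \mid T_2$ combined with totality of $\rleq$ forces it to be $\rgt\alpha$; or $\spair\alpha\gamma$ is rewritten through its $\gamma$-term, in which case $\spair\alpha\gamma$ itself is removed by \rwc{} and we are done.

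The obstacle is the remaining sub-case, in which the $\gamma$-terms dominate in both $\spair\alpha\gamma$ and $\spair\gamma\beta$ (so $T_2 = \tfrac{\lcm(\hdp\alpha,\hdp\gamma)}{\hdp\gamma}\sig\gamma$ and $T_3 = \tfrac{\lcm(\hdp\gamma,\hdp\beta)}{\hdp\gamma}\sig\gamma$) and neither is already rewritten. Here the witness introduced at $T_2$ has a signature involving $\sig\gamma$, which a priori need not divide $T_1$. The critical observation is that the scalings in the module identity collapse both $\gamma$-contributions to the common signature $\tfrac{\lcm(\hdp\alpha,\hdp\beta)}{\hdp\gamma}\sig\gamma$, so processing the first of $\spair\alpha\gamma$, $\spair\gamma\beta$ produces an element that, after rescaling by the appropriate monomial quotient, divides the $\gamma$-term signature of the other. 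One then verifies, using the rewrite-order axiom and the presence of the initial Koszul syzygies from line~\ref{alg:rba:initialsyz}, that this element is $\rgt\gamma$ in $\rleq$, so it rewrites the $\gamma$-term of the second pair. Carefully matching the divisibilities after these monomial rescalings is what I expect to be the technical heart of the proof.
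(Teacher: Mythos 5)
Your starting point is the same as the paper's: the module identity $\spair\alpha\beta = u\,\spair\alpha\gamma + v\,\spair\gamma\beta$ together with the induced equalities of the scaled side-signatures ($a_\beta = u\,a_\gamma$, $b_\alpha = v\,b_\gamma$, $u\,c_\alpha = v\,c_\beta$). The paper then takes the set $\mathcal{T} = \set{\sig{a_\beta\alpha},\sig{b_\alpha\beta},\sig{u c_\alpha\gamma}}$, observes that $\max_<\mathcal{T}$ is the shared scaled signature of exactly two of the three S-pairs, and concludes that one of that pair of S-pairs is rewritten by the other. Your version reorganizes this as a case analysis keyed to $T_1 = \sig{\spair\alpha\beta}$ being processed last and to which side realises $T_1$. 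That is a legitimate reformulation, and your treatment of the cases where the dominant side of $\spair\alpha\beta$ is also dominant in $\spair\alpha\gamma$ (or is already rewritable) is sound: there the witness at $T_2 = T_1/u$ satisfies $\sig\alpha \mid T_2 \mid T_1$, so the rewrite-order axiom applies directly.

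The problem is in your ``obstacle'' case and it is a genuine gap, not just a loose end. Write $T_2 = c_\alpha\sig\gamma$ and $T_3 = c_\beta\sig\gamma$ for the $\gamma$-side signatures. You claim that the scalings ``collapse both $\gamma$-contributions to the common signature $\tfrac{\lcm(\hdp\alpha,\hdp\beta)}{\hdp\gamma}\sig\gamma$'', and that processing whichever of $\spair\alpha\gamma$, $\spair\gamma\beta$ comes first produces an element that ``after rescaling \ldots divides the $\gamma$-term signature of the other.'' But $\tfrac{\lcm(\hdp\alpha,\hdp\beta)}{\hdp\gamma}\sig\gamma = u\,T_2 = v\,T_3$ is a common \emph{multiple} of $T_2$ and $T_3$, not a common divisor. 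A rewriter usable at signature $T_3$ must have signature dividing $T_3$, i.e.\ we would need $T_2 \mid T_3$ (equivalently $c_\alpha \mid c_\beta$, i.e.\ $\lcm(\hdp\alpha,\hdp\gamma)\mid\lcm(\hdp\gamma,\hdp\beta)$), and this fails in general --- already for $\hdp\alpha=x^2y$, $\hdp\beta=xy^2$, $\hdp\gamma=xy$ one gets $c_\alpha=x$, $c_\beta=y$, which are divisibility-incomparable. Scaling the element produced at $T_2$ by any monomial can only \emph{increase} its signature, so it can never be promoted into $\mathcal{C}_{T_3}$. Hence the argument you sketch for the remaining sub-case does not close the gap; the divisibility runs in the wrong direction, and the Koszul syzygies from line~\ref{alg:rba:initialsyz} do not repair this because their signatures are tied to $\proj{\mbasis i}\sig{\mbasis j}$, not to $c_\alpha$ or $c_\beta$. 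The paper's own proof is terser and flags exactly this situation in a footnote ($\sig{u c_\alpha\gamma}=\max_<\mathcal{T}$ is possible) rather than attempting the rescaling step you propose; if you want a complete argument for that case you need a different mechanism, not a scaled version of the element produced at the smaller $\gamma$-signature.
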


\begin{proof}
Since $\left(\proj\alpha,\proj\beta,\proj\gamma\right)$ fulfills \chc{}
there exist by Equation~\ref{eq:chain-crit} monomials $u,v \in \mon$
such that
\begin{equation}
\proj{\spair\alpha\beta} = u\;\proj{\spair\alpha\gamma} + v\;
\proj{\spair\gamma\beta}.
\label{eq:chain-crit-spair}
\end{equation}
For corresponding monomial multiples this corresponds, on the polynomial side, to
\[a_\beta \proj\alpha - b_\alpha \proj\beta =
u \left(a_\gamma \proj\alpha - c_\alpha \proj\gamma\right) +
v \left(c_\beta \proj\gamma - b_\gamma \proj\beta\right)
\]
such that $a_\beta = u a_\gamma$, $b_\alpha = v b_\gamma$ and
$u c_\alpha = v c_\beta$.
This has the effect that the corresponding multiplied signatures coincide,
too. $\mathcal{T} := \left\{\sig{a_\beta\alpha}, \sig{b_\alpha\beta},
\sig{u c_\alpha \gamma} \right\}$ denotes the set of all appearing signatures. It follows that $\max_< \mathcal{T}$ appears twice in
Equation~\ref{eq:chain-crit-spair}.\footnote{Note that $\sig{uc_\alpha
\gamma} = \max_< \mathcal{T}$ is possible.} Thus, depending on $u$ and $v$ one
of the two S-pairs with maximal signature is detected by \rwc{}
due to the other one.
\end{proof}

Theorem~\ref{thm:rew-chain-crit} states that whenever an S-polynomial would be
discarded by \chc{}, then the corresponding S-pair is removed by \rwc{}.
Thus, a signature-based \grobner{} basis algorithm implementing \rwc{}
as stated in Lemma~\ref{lem:rew-crit} includes \chc{}.

\begin{remark}\
\begin{enumerate}
\item Depending on $u$ and $v$ one has a choice which of the three S-polynomials
fulfilling \chc{} can be removed. In a signature-based
\grobner{} basis algorithm the chosen rewrite order $\rleq$ uniquely
defines the S-pair to be discarded in such a situation.
\item \chc{} corresponds to a found syzygy. The problem for
an efficient implementation of a signature-based \grobner{} basis algorithm
using sig-poly pairs (see Remark~\ref{rem:sig-poly-pairs}) is that one cannot
track this syzygy: One only has the information of the signature stored. In the
setting of Theorem~\ref{thm:rew-chain-crit} two module lead terms cancel out
each other, that means the signatures are the same. Due to the fact that the
tail of the syzygy is not stored the algorithm cannot recompute the new module
lead term resp. signature that could be added to $\hdsyz$. Thus, although we can
remove an S-pair corresponding to this relation in \rba{} we are not able to use
this information on a more global level by adding information to $\hdsyz$.
\end{enumerate}
\end{remark}

As seen in Theorem~\ref{thm:rew-chain-crit} \chc{} is just a particular case of
\rwc{}. Thus it is clear that \rwc{} can remove more elements than \chc{}.

\begin{example}
\label{ex:rew-stronger-than-chain-crit}
Let $\field$ be the finite field with $7$ elements and let $\ring =
\field[x,y,z,t]$. Let $<$ be the graded reverse lexicographical monomial order
which we extend to $\potl$ on $\ring^3$. Consider the input ideal $I$ generated
by $f_1 = yz-2t^2$, $f_2 = xy+t^2$, and $f_3 = x^2z+3xt^2-2yt^2$. We present
the calculations done by \rba{} using $\rleqsb$ in
Figure~\ref{fig:ex:rew-stronger-than-chain-crit}.

\rba{} removes $\spair{\gbasis 6}{\gbasis 1} = y \gbasis 6 - z^2t^2 \gbasis
1$ due to the rewriting of $\gbasis 5$:
$\sig{y\gbasis 6} = \sig{t^2 \gbasis 5}$ and $\hdp{y\gbasis 6} = yz^3t^2 >
y^2t^4 = \hdp{t^2 \gbasis 5}$. 
Recalculating this rewriting one gets
\begin{align*}
\spair{\gbasis 6}{\gbasis 1} &= y \gbasis 6 - z^2t^2 \gbasis 1 = t^2\left(\gbasis 5 + (2y+3z) \gbasis 3 - t^2 \gbasis 1\right)
\end{align*}
where $\hdp{\gbasis 5} > \hdp{y\gbasis 3} > \hdp{t^2 \gbasis 1}$. \chc{} does
not detect this relation, even not in an optimized Gebauer-M\"oller
variant as implemented in \singular{} (\cite{singular400}). There, a reduction
to zero is computed.
\begin{center}
\begin{figure}
\centering
\setlength{\tabcolsep}{0.8em}
\begin{tabular}{cccc}
$\gbasis i\in\basis$ & reduced from & $\hd{\overline{\gbasis i}}$ & $\sig{\gbasis i}$ \\
\hline
$\gbasis 1$ & $\mbasis 1$ & $yz$ & $\mbasis 1$\\
$\gbasis 2$ & $\mbasis 2$ & $xy$ & $\mbasis 2$\\
$\gbasis 3$ & $\spair{\gbasis 2}{\gbasis 1}=z\gbasis 2 - x \gbasis 1$ &
$xt^2$ & $z \mbasis 2$\\
$\gbasis 4$ & $\mbasis 3$ & $x^2z$ & $\mbasis 3$\\
$\gbasis 5$ & $\spair{\gbasis 4}{\gbasis 2}=y \gbasis 4 - xz \gbasis 2$ &
$y^2t^2$ & $y \mbasis 3$\\
$\gbasis 6$ & $\spair{\gbasis 4}{\gbasis 3}=t^2 \gbasis 4 - xz \gbasis 3$ &
$z^3t^2$ & $t^2 \mbasis 3$\\
\end{tabular}
\caption{Computations for \rba{} in
  Example~\ref{ex:rew-stronger-than-chain-crit}.}
\label{fig:ex:rew-stronger-than-chain-crit}
\end{figure}
\end{center}
\end{example}

\begin{remark}
Any signature-based \grobner{} basis algorithm implementing \rwc{} as
discussed in sections~\ref{sec:detection} and~\ref{sec:improve-rewritten} does
not need any further modifications to detect the useless computations
\chc{} predict. This leads to an easier description of the algorithm compared to
Theorem~3.1 in~\cite{gerdtHashemiG2V} where \chc{} is explicitly added as
conditions $C_2$ and $C_3$. There Gerdt and Hashemi add a
Gebauer-M\"oller-like criteria check to their modified \gggv{} algorithm.
\end{remark}

\section{Does RC cover PC, too?}
\label{sec:covers-prod-crit}
As for \chc{} we wish to find a connection between \rwc{} and \prc{}.
The nice fact is that we can easily translate Lemma~\ref{lem:prod-crit} to
the signature-based world.
Note the subtle difference that the argument of Corollary~\ref{cor:sig-prod-crit}
holds for regular \sreductions{} whereas Lemma~\ref{lem:prod-crit} does not have
this restriction.

\begin{corollary}[Variant of Lemma~\ref{lem:prod-crit}] \
Let $\alpha,\beta \in \module$ with $\lcmm\alpha\beta = \hdp\alpha
\hdp\beta$. Then $\spair\alpha\beta$ regular \sreduces{} to zero w.r.t.
$\left\{\alpha,\beta\right\}$.
\label{cor:sig-prod-crit}
\end{corollary}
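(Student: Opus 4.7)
The plan is to lift Buchberger's classical proof of the Product criterion (Lemma~\ref{lem:prod-crit}) to the signature setting, and then verify that the resulting reduction chain consists of \emph{regular} \sreduction{} steps. Since $\lcmm\alpha\beta = \hdp\alpha\hdp\beta$, up to a nonzero scalar in $\field$ one has $\spair\alpha\beta = \hdp\beta\,\alpha - \hdp\alpha\,\beta$. Writing $p_\alpha \defeq \proj\alpha - \hdp\alpha$ and $p_\beta \defeq \proj\beta - \hdp\beta$ for the tails, the standard algebraic manipulation gives the key identity $\proj{\spair\alpha\beta} = \hdp\beta\proj\alpha - \hdp\alpha\proj\beta = p_\alpha\proj\beta - p_\beta\proj\alpha$, which underlies the classical argument.

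Next I would use this identity to produce an explicit sequence of \sreduction{} steps of $\spair\alpha\beta$ by $\{\alpha,\beta\}$. At each stage, let $\ell$ be the leading polynomial term of the current element $\gamma$. Starting from $\ell_0 = \hd{p_\alpha\proj\beta - p_\beta\proj\alpha} \leq \max(\hd{p_\alpha}\hdp\beta,\, \hd{p_\beta}\hdp\alpha) < \hdp\alpha\hdp\beta$, coprimality of $\hdp\alpha$ and $\hdp\beta$ together with $\hd{p_\alpha} < \hdp\alpha$ and $\hd{p_\beta} < \hdp\beta$ rules out cancellation between these two top terms, so $\ell_0$ is divisible by either $\hdp\alpha$ or $\hdp\beta$. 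One reduces by the corresponding monomial multiple $t\cdot\alpha$ or $t\cdot\beta$ with $t = \ell/\hdp\alpha$ or $t = \ell/\hdp\beta$. Iterating, the process terminates (well-ordering of $\leq$) and collapses $\proj\gamma$ to $0$, so $\gamma$ ends as the Koszul-type syzygy $\proj\beta\,\alpha - \proj\alpha\,\beta$, which is exactly what $\spair\alpha\beta - (p_\alpha\beta - p_\beta\alpha)$ computes in $\module$.

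Finally, I would check the signature bound. Without loss of generality $\sig{\spair\alpha\beta} = \hdp\beta\sig\alpha$, and regularity of the S-pair gives the strict inequality $\hdp\alpha\sig\beta < \hdp\beta\sig\alpha$. Throughout the chain one has $\ell < \hdp\alpha\hdp\beta$, so the multiplier $t$ satisfies $t < \hdp\beta$ when reducing by $\alpha$ and $t < \hdp\alpha$ when reducing by $\beta$. By compatibility of the module monomial order this yields $\sig{t\alpha} = t\sig\alpha < \hdp\beta\sig\alpha = \sig{\spair\alpha\beta}$ in the first case, and $\sig{t\beta} = t\sig\beta < \hdp\alpha\sig\beta < \sig{\spair\alpha\beta}$ in the second. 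Each inequality is strict, hence every step is a \emph{regular} \sreduction{}, so $\spair\alpha\beta$ regular \sreduces{} to the Koszul syzygy, i.e.\ to zero.

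The main obstacle will be to make the iteration rigorous: one has to show that at every intermediate step (not just the first) the current leading polynomial term is still divisible by $\hdp\alpha$ or $\hdp\beta$, and that the bound $\ell < \hdp\alpha\hdp\beta$ is preserved. I expect this to follow by a clean induction on $\ell$ using the coprimality/non-cancellation argument at each stage, essentially tracking that the current $\proj\gamma$ remains a difference $p'_\alpha\proj\beta - p'_\beta\proj\alpha$ with $\hd{p'_\alpha} < \hdp\alpha$ and $\hd{p'_\beta} < \hdp\beta$ (one of the two being possibly zero), so the same top-term analysis applies.
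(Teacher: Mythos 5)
Your proof is correct and follows essentially the same route as the paper: lift the classical Product-criterion reduction chain (governed by the identity $\proj{\spair\alpha\beta}=p_\alpha\proj\beta-p_\beta\proj\alpha$) and observe that every reducer multiplier is strictly smaller than $\hdp\alpha$ or $\hdp\beta$, so by compatibility of the module order all reducer signatures lie strictly below $\max_<\set{\hdp\beta\sig\alpha,\hdp\alpha\sig\beta}=\sig{\spair\alpha\beta}$, making every step regular. The paper's one-line proof states exactly this signature bound for $u\in\support{\proj\beta-\hdp\beta}$ and $v\in\support{\proj\alpha-\hdp\alpha}$ and leaves the standard reduction iteration implicit; you simply spell that iteration out.
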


\begin{proof}
Let $T = \max_<\left\{\sig{\hdp\beta \alpha},\sig{\hdp\alpha \beta}\right\}$.
For any $u \in \support{\proj\beta - \hdp\beta}$,
$v \in \support{\proj\alpha - \hdp\alpha}$
it holds that $\sig{u\alpha} <T$ and $\sig{v\beta}<T$.
\end{proof}

The outcome of Corollary~\ref{cor:sig-prod-crit} is that in \rba{}
\prc{} can be used without any further restrictions or modifications. This is
something already discussed
in~\cite{gashPhD2008,ghmInvolutiveF5-2013,gerdtHashemiG2V}. With this it is
clear that one can modify any signature-based \grobner{} basis algorithm by
adding one of the following steps:
\begin{enumerate}
\item Check \prc{} explicitly as it is done, for example, in
\gggv{} in~\cite{gerdtHashemiG2V} (condition $C_1$).
\item Add all possible principal syzygies to \hdsyz{} whenever a new element
$\gamma$ with $\proj\gamma \neq 0$ is added to \basis{}. This is done, for
example, in the 2013 revision of the \gvw{} algorithm, see Step $4b(b1)$ in
Figure~3.1 in~\cite{gvwGVW2013}.
\end{enumerate}

Clearly, both of these possible optimizations add an overhead to the algorithm,
for example in the second case the signatures of most of the added principal
syzygies are multiples of signatures already available in \hdsyz{}. Thus one
also has to interreduce \hdsyz{} in order to have efficient checks of
\rwc{} in the following. 
In other words, it makes sense to ask the following more algebraic question:
Does \rwc{} cover \prc{}? If not, which ones are not covered, and how can one
handle thses in \rba{} efficiently?

Finding answers to these questions seems to be an easy task:
\prc{} is based on the
fact that $\proj\beta \alpha - \proj\alpha \beta \in \module$ is a syzygy. So
the only question is to see if \rba{} finds the corresponding signature 
\[
\max_<\left\{\sig{\hdp\beta \alpha},\sig{\hdp\alpha \beta}\right\}
=\max_<\left\{\hdp\beta \sig\alpha,\hdp\alpha \sig\beta\right\}.
\]

The answer to this question is not trivial as it seems to depend in
the signature-based world on the chosen module monomial order.
Almost always \prc{} is covered by \rwc{}, still one can construct
counterexamples:

\begin{example}[Example~\ref{ex:prod-crit} continued]
In Section~\ref{sec:improve-rewritten} we have implemented $\updatesyz$ in order to
strengthen \rwc{} by relying on more known syzygies. Still, for
$\schl$ as module monomial order we still had $3$ zero reductions not discarded
in advance: 
$\sigma_1 = (y+z) \alpha_4 - (x-y) \alpha_3$,
$\sigma_4 = (x-y) \alpha_6 - (z^2-zt) \alpha_4$ and
$\sigma_5 = (x^2-xz) \alpha_5 - (z^3-xzt) \alpha_4$.
We see that $\sigma_5$ can be presented as
$\sigma_5 = \proj{\alpha_4} \alpha_5 - \proj{\alpha_5} \alpha_4$.
This means that $\sigma_3$ corresponds to a syzygy coming from an S-pair
fulfilling \prc{}, $\spair{\alpha_5}{\alpha_4}$, which is not
detected by \rwc{} in \rba{}.
\end{example}

Note that \rba{} using $\potl$ or $\dpotl$ does remove all S-pairs fulfilling
\prc{}. Moreover, if one slightly changes $\schl$ to use the
variant of $\potl$ that prefers the smaller indices (see note after
Definition~\ref{def:module-monomial-order}) then \rwc{} also covers
\prc{}. However, 
mirroring this change in the input binomials one can easily construct another
system where \rba{} does not remove all S-pairs fulfilling \prc{} with this
module monomial order.

Thus we want to use the fact that not all syzygies coming from S-pairs
fulfilling \prc{} are found in \rba{}.
Due to Corollary~\ref{cor:sig-prod-crit} we
can easily implement it in \rba{} without affecting correctness or termination.
In order to do this efficiently, one should take a bit care: Since
\rba{} is a signature-based \grobner{} basis algorithm its criterion,
  \rwc{}, should be favored over \prc{}. Thus we might
change Algorithm~\ref{alg:rewritable} to the variant presented in
Algorithm~\ref{alg:rewritable-pc}.

\begin{algorithm}
\begin{algorithmic}[1]
\Require S-pair $a\alpha - b\beta \in\module$
\Ensure ``true'' if S-pair is rewritable or fulfills the Product criterion; else ``false''
\If {$a\alpha$ or $b\beta$ is rewritable w.r.t.
  $\hdsyz$}\label{alg:rewritable-pc:syz}
  \State \textbf{return} true
\EndIf
\If {$a\proj\alpha-b\proj\beta$ fulfills the Product
  criterion}\label{alg:rewritable-pc:pc}
  \State $\hdsyz \gets \hdsyz \cup \{\proj\alpha \beta - \proj\beta\alpha\}$
  \label{alg:rewritable-pc:add-syz}
  \State \textbf{return} true
\EndIf
\If {$a\alpha$ or $b\beta$ is rewritable w.r.t.
  $\basis$}\label{alg:rewritable-pc:rew}
  \State \textbf{return} true
\EndIf
\State \textbf{return} false
\end{algorithmic}
\caption{\rewritablenoargnew{} (Rewritten \& Product Criterion Check)}
\label{alg:rewritable-pc}
\end{algorithm}

Comparing $\rewritablenoargnew$ with $\rewritablenoarg$ we see first that
\rwc{} is split up: In Line~\ref{alg:rewritable-pc:syz} we check
with elements in $\hdsyz$, in Line~\ref{alg:rewritable-pc:rew} we check with
elements in $\basis$. Between those two tests, we check for \prc{}
(Line~\ref{alg:rewritable-pc:pc}). In this way the overhead is
minimized: 
\begin{enumerate}
\item If an S-pair is already removed in Line~\ref{alg:rewritable-pc:syz} then
we have already a corresponding syzygy. This happens in
Example~\ref{ex:prod-crit} for $5$ S-pairs fulfilling \prc{}.
\item If an S-pair was not removed in the first step, but fulfills
\prc{} then the corresponding syzygy $\sigma = \proj\alpha \beta - \proj\beta \alpha$
is missing in $\hdsyz$. We can remove $\spair\alpha\beta$ due to
Corollary~\ref{cor:sig-prod-crit}. Here it makes sense to add $\sigma$ to
$\hdsyz$ since there exists no $\sigma' \in \hdsyz \backslash\{\sigma\}$ such
that $\sig{\sigma'} \mid \sig{\sigma}$.
\item If $\spair\alpha\beta$ was not removed in the first two steps, we check
for rewriters in $\basis$.
\end{enumerate}

Above we said that \rba{} shall favor \rwc{} over \prc{}, so why do we check
rewritability w.r.t. $\basis$ last? The answer
to this question is that being rewritable w.r.t. $\basis$ is a local property.
At some point \rba{} might have computed enough successors of elements in
$\basis$ that a canonical rewriter in signature $T$ might no longer be the
canonical one in signature $tT$ for some $t\in\mon$. Thus the relation stored in
this canonical rewriter is no longer available to \rba{}. Having instead a new syzygy
$\sigma \in \hdsyz$ this is a global canonical rewriter that removes useless
S-pairs in any signature that is a multiple of $\sig\sigma$. So even if for
$a\proj\alpha - b\proj\beta$ fulfilling \prc{} $a\alpha$ or
$b\beta$ is rewritable w.r.t. $\basis$ it makes sense to check \prc{} first
and add a new syzygy to $\hdsyz$ as we see in
Section~\ref{sec:exp-results}.

Looking again at Example~\ref{ex:prod-crit} we see that a variant of \rba{} using
$\rewritablenoargnew$ detects $\spair{\alpha_5}{\alpha_4}$ and adds the
corresponding syzygy $\sigma_5$ without computing a zero reduction. Thus also
when using $\schl$ \rba{} predicts all except $2$ zero reductions.

As a last fact let us compare the ideas behind Gebauer and M\"oller's
implementation of \prc{} and \chc{} in terms of efficiency to
signature-based \grobner{} basis algorithms:
Assume that $\left(\proj\alpha,\proj\beta,\proj\gamma\right)$ fulfills
\chc{} such that
$
\lcm\left(\hdp \alpha,\hdp\beta\right) = \lcm\left(\hdp \alpha,\hdp\gamma\right).
$
Now both, $\proj{\spair\alpha\beta}$
and $\proj{\spair\alpha\gamma}$ can be removed. The hard part is to not remove both at the
same time. Gebauer and M\"oller implemented a step-by-step check of
\chc{} and \prc{}, in order to remove useless S-polynomials as early as possible
including a check to not remove both S-poly\-nomials in the above situation.
Looking at this from the signature-based point of view, $\sig{\spair\alpha\beta}
= \sig{\spair\alpha\gamma}$. Due to the rewrite order and the handling of
S-pairs by increasing signature in \rba{} one of the two S-pairs is handled
first, say $\spair\alpha\beta$. This means that (if there is no other criterion
to remove it) $\spair\alpha\beta$ is further reduced whereas
$\spair\alpha\gamma$ is rewritten. Thus it is not possible to remove too many
elements in such a chain when using signature-based \grobner{} basis algorithms.

Moreover, the Gebauer and M\"oller implementation checks \prc{}
last in order to remove correctly as much as possible useless S-polynomials.
For example, for $\left(\proj\alpha,\proj\beta,\proj\gamma\right)$ fulfilling  
\[
\proj{\spair\alpha\beta} = u\proj{\spair\alpha\gamma} + v\;
\proj{\spair\gamma\beta}
\]
it is possible that one removes $\proj{\spair\alpha\beta}$. Later on,
$\proj{\spair\alpha\gamma}$ fulfills \prc{} and is removed, too. In this
situation, out of three S-polynomials only one, $\proj{\spair\beta\gamma}$ needs
to be further reduced. In a signature-based algorithm the
question whether \rwc{} w.r.t. $\hdsyz$ detects one of these elements naturally comes up.
If $\sig{\spair\alpha\gamma}$ is rewritable w.r.t. $\hdsyz$ then
$\sig{\spair\alpha\beta}$ is so, too. Thus also in
\rba{} only
one S-pair would be left.

\section{Covering PC in RBA using $\potl$}
\label{sec:potl-covers-prod-crit}
An astonishing point is the fact that when \rba{} uses $\potl$ we have not
found any example where an S-pair fulfilling \prc{} is not
already rewritable w.r.t. $\hdsyz$. Moreover, this behaviour seems to not depend
on the ideal at all since we tried millions of examples from homogeneous to
affine, from zero to higher dimensional. In all cases it holds that when using
$\rewritablenoargnew$ in \rba{} \prc{} was not used once to remove an S-pair.
Let us take a more detailed look at this situation:

$\potl$ enforces \rba{} to compute incrementally:
For each $i$ a signature \grobner{} basis $\basis_i$ for
$\langle f_1,\ldots,f_{i}\rangle$ is
computed. Afterwards $f_{i+1}$ enters the computations and new S-pairs are
handled until a signature \grobner{} basis for $\langle
f_1,\ldots,f_{i+1}\rangle$ is achieved. With the ideas of~\cite{epF5C2009}
we can assume to have the reduced \grobner{} basis $\proj{\basis_i}=\{\proj{\mbasis
  1},\ldots,\proj{\mbasis {k-1}}\} \subset \ring$ for $\langle
f_1,\ldots,f_i\rangle$ before adding $f_{i+1}$ to the computations. Assuming
further to compute in the next incremental step a signature \grobner{} basis for
$\langle \proj{\mbasis 1},\ldots,\proj{\mbasis {k-1}},f_{i+1}\rangle$, we can
adjust notations by setting $f_{i+1} = \proj{\mbasis k}$.
Due to the fact that $\proj{\basis_i}$ is the reduced \grobner{} basis for $\langle
f_1,\ldots,f_i\rangle$ we have more structure
to exploit. On the other hand the incremental run of \rba{} itself may put a penalty on
the efficiency of the computations (often \rba{} performs better using $\schl$
instead of $\potl$).

Next we assume the start at the above incremental step with $\mbasis k$.
We look at S-pairs in the order they are generated by \rba{}: The first possibility
is to built S-pairs between elements of index $k$ and those of index $<k$
(corresponding to polynomials in $\proj{\basis_i}$).

\begin{lemma}
Assume $\potl$. Let $\alpha,\beta \in \module$ such that $\ind\alpha <
\ind\beta$ and $\proj{\spair\alpha\beta}$ fulfills \prc{}.
Then $a\alpha$ or $b\beta$ is rewritable w.r.t. $\hdsyz$.
\label{lem:covers-prod-crit-lower-idx}
\end{lemma}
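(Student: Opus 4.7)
The plan is to exploit two features of $\potl$: S-pair signatures at mixed indices are dictated by the higher-index factor, and $\potl$ forces a strictly incremental computation, so the principal syzygy attached to $\mbasis j$ is already available in $\hdsyz$ before any S-pair in index $j$ is processed.

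First I would set up the signature bookkeeping. Since $\proj{\spair\alpha\beta}$ fulfills \prc{}, up to scalar we have $a=\hdp\beta$ and $b=\hdp\alpha$. Let $i=\ind\alpha<j=\ind\beta$. Under $\potl$ the index dominates, so
\[
\sig{a\alpha}=\hdp\beta\sig\alpha\in\ring\mbasis i \plt \hdp\alpha\sig\beta=\sig{b\beta}\in\ring\mbasis j,
\]
hence $\sig{\spair\alpha\beta}=\sig{b\beta}=\hdp\alpha\sig\beta$. Because $\ind\beta=j$, there is a monomial $c\in\mon$ with $\sig\beta=c\mbasis j$, so $\sig{b\beta}=\hdp\alpha\,c\,\mbasis j$.

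Next I would locate a suitable syzygy in $\hdsyz$. Under $\potl$ the algorithm treats S-pairs by increasing signature, hence by increasing index; in particular the element $\mbasis j$ (whose signature $\mbasis j$ is the smallest at index $j$) is added to $\basis$ before any S-pair with signature of index $j$ is considered, and at that moment $\alpha$ already lies in $\basis$ because $\ind\alpha=i<j$. By Characteristic~\ref{char:updatesyz}(a), $\updatesyz$ then appends the principal syzygy $\proj\alpha\mbasis j-f_j\alpha$ to $\hdsyz$. Its signature under $\potl$ is the $\mbasis j$-part, namely $\hdp\alpha\mbasis j$, since any term in $\ring\mbasis i$ is $\potl$-smaller.

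Finally I would close the argument by divisibility: $\hdp\alpha\mbasis j\mid\hdp\alpha\,c\,\mbasis j=\sig{b\beta}$, so there exists $\sigma\in\hdsyz$ with a monomial multiple of signature $\sig{b\beta}$. Since every element of $\hdsyz$ is $\rgt$-above every element of $\basis$, the canonical rewriter in signature $\sig{b\beta}$ lies in $\hdsyz$; it is therefore different from $b\beta\in\basis$, and by Definition~\ref{def:rewriter} $b\beta$ is rewritable w.r.t.\ $\hdsyz$. The only real subtlety I anticipate is making the temporal argument airtight, i.e.\ that $\alpha$ is genuinely in $\basis$ at the moment $\updatesyz$ fires for $\mbasis j$; this is purely a consequence of $\potl$ forcing all elements of index $<j$ to be produced before $\mbasis j$ and no further element of smaller index appearing afterwards, so no case distinction on whether $\alpha$ is an input generator or a later basis element is needed.
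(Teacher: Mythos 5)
Your proof is correct and reaches the conclusion by the same key calculation as the paper: under $\potl$ the S-pair signature is $\sig{b\beta}=\hdp\alpha\,c\,\mbasis j$ (with $\sig\beta=c\mbasis j$), and you then exhibit a syzygy in $\hdsyz$ whose signature divides it, forcing the canonical rewriter into $\hdsyz$. The only difference is the mechanism used to justify that the required syzygy signature $\hdp\alpha\mbasis j$ is already in $\hdsyz$: you invoke Characteristic~\ref{char:updatesyz}(a) (adding principal syzygies $\proj\alpha\gamma-\proj\gamma\alpha$ when $\gamma=\mbasis j$ enters $\basis$, at which point every lower-index $\alpha$ is already present), whereas the paper works in the F5C-relabeled setting of Section~\ref{sec:potl-covers-prod-crit}, where the lower-index basis elements \emph{are} the input generators $\mbasis 1,\ldots,\mbasis{k-1}$, so the needed element comes straight from the initial Koszul syzygies in Line~\ref{alg:rba:initialsyz} together with $\hdp\alpha\in L\bigl(\proj{\basis_i}\bigr)$. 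Both mechanisms are in force in the algorithm being analyzed and yield the same conclusion; your route is a hair more direct (exact divisibility $\hdp\alpha\mbasis j\mid\hdp\alpha\,c\,\mbasis j$ rather than divisibility through some $\hdp{\mbasis\ell}\mid\hdp\alpha$) and does not lean on the relabeling convention, while the paper's version makes the appeal to Line~\ref{alg:rba:initialsyz} explicit, which it reuses in Lemma~\ref{lem:covers-prod-crit-same-idx-initial}.
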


\begin{proof}
$\sig{\spair\alpha\beta}$ is $\hdp\alpha \sig\beta$ since $\ind\beta
> \ind\alpha$ and we assume $\potl$. By construction $\hdp\alpha \in
L\left(\proj{\basis_i}\right)$ and $\sig\beta$ is a multiple of $\mbasis k$.
Due to Line~\ref{alg:rba:initialsyz} for Algorithm~\ref{alg:rba} there exists
an element $\sigma \in \hdsyz$ such that $\sig\sigma \mid \hdp\alpha\sig\beta$.
\end{proof}

When generating S-pairs with both generators of index $k$ two different
situations can appear: If one of the generators is $\mbasis k$ we can prove the
following statement.

\begin{lemma}
Assume $\potl$. Let $\mbasis k,\beta \in \module$ such that $\ind\beta = k$ and 
$\proj{\spair{\mbasis k}{\beta}}$ fulfills \prc{}. Then $a\alpha$ or
$b\beta$ is rewritable w.r.t. $\hdsyz$.
\label{lem:covers-prod-crit-same-idx-initial}
\end{lemma}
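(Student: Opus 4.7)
My plan is to pinpoint which side of the S-pair carries the larger signature, and then to exhibit an already-recorded syzygy in $\hdsyz$ whose signature divides it. Write $\sig\beta = s\mbasis k$, which is legal since $\ind\beta=k$. Because of the Product hypothesis we have $\spair{\mbasis k}{\beta} = \hdp\beta\,\mbasis k - \hdp{\mbasis k}\,\beta$, and under $\potl$ the two candidate term-signatures are $\hdp\beta\,\mbasis k$ and $\hdp{\mbasis k}\sig\beta = s\hdp{\mbasis k}\,\mbasis k$, both with index $k$. Hence $\sig{\spair{\mbasis k}{\beta}} = \max_<(\hdp\beta,\, s\hdp{\mbasis k})\,\mbasis k$, and the concrete goal is to show that $\hdp{\mbasis k}\beta$ is rewritable w.r.t.\ $\hdsyz$.

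The algebraic core of the argument is the claim that $s\hdp{\mbasis k}\in L(\langle f_1,\ldots,f_i\rangle)$. I would argue this by contradiction, leveraging the incremental invariant of signature \grobner{} bases under $\potl$: because $\proj\beta \equiv s\proj{\mbasis k} \pmod{\langle f_1,\ldots,f_i\rangle}$ and $\beta$ is regularly \sreduced{} with index-$k$ signature (so its lead cannot lie in $L(\basis_i) = L(\langle f_1,\ldots,f_i\rangle)$), $\hdp\beta$ equals the lead of the normal form of $s\proj{\mbasis k}$ modulo $\langle f_1,\ldots,f_i\rangle$. If $s\hdp{\mbasis k}$ were not in that lead ideal, the normal form would begin with $s\hdp{\mbasis k}$, forcing $\hdp\beta = s\hdp{\mbasis k}$; but then $\hdp{\mbasis k}\mid\hdp\beta$, contradicting \prc{} (the trivial case $\hdp{\mbasis k}=1$ being immediate). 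Thus $s\hdp{\mbasis k}\in L(\langle f_1,\ldots,f_i\rangle)$ and $\hdp\beta < s\hdp{\mbasis k}$, so $\sig{\spair{\mbasis k}{\beta}} = s\hdp{\mbasis k}\,\mbasis k = \sig{\hdp{\mbasis k}\beta}$.

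To finish, since $\proj{\basis_i}$ is a \grobner{} basis for $\langle f_1,\ldots,f_i\rangle$, some $\alpha\in\basis_i$ satisfies $\hdp\alpha\mid s\hdp{\mbasis k}$. Characteristic~\ref{char:updatesyz} guarantees that when $\mbasis k$ was added to $\basis$ the principal syzygy $\sigma = \proj\alpha\,\mbasis k - \proj{\mbasis k}\,\alpha$ was inserted into $\hdsyz$; under $\potl$ it has signature $\hdp\alpha\,\mbasis k$, which divides $s\hdp{\mbasis k}\,\mbasis k$. Therefore $\sigma$ makes $\hdp{\mbasis k}\beta$ rewritable w.r.t.\ $\hdsyz$, matching the conclusion of Lemma~\ref{lem:covers-prod-crit-lower-idx} in the other direction.

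The main obstacle will be justifying the invariant tying $\hdp\beta$ to the normal form of $s\proj{\mbasis k}$ modulo $\langle f_1,\ldots,f_i\rangle$. It is the standard engine driving the \ffc{} incremental setup, but making it precise requires either a standard-representation argument for $\beta$, or a step-by-step induction on the regular \sreduction{} history that produced $\beta$, showing that any leading term in $L(\basis_i)$ is eliminated by a reducer of strictly smaller signature (always available under $\potl$ since every element of $\basis_i$ has smaller signature than any index-$k$ element), so the process can only halt once the current lead already lies outside that lead ideal.
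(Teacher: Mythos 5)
Your overall strategy is sound and matches the paper's target: both aim to show that $s\hdp{\mbasis k}$ (where $\sig\beta=s\mbasis k$) lies in $L(\proj{\basis_i})$, and both then invoke the Koszul syzygy $\proj\alpha\,\mbasis k - \proj{\mbasis k}\,\alpha\in\hdsyz$ of signature $\hdp\alpha\mbasis k$ dividing $\hdp{\mbasis k}\sig\beta$. Where you diverge from the paper is in \emph{how} you establish the membership $s\hdp{\mbasis k}\in L(\proj{\basis_i})$. The paper traces $\beta$'s S-pair ancestry backwards: $\beta$ descends, through a chain of regular $\sigSym$-reductions whose signatures are monomial multiples of one another, from an initial S-pair $\spair{\mbasis k}{\mbasis j}$ with $j<k$, whose signature is $\frac{\lcmm{\mbasis k}{\mbasis j}}{\hdp{\mbasis k}}\mbasis k$. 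Multiplying this through by $\hdp{\mbasis k}$ immediately exhibits $\hdp{\mbasis j}$ as a divisor of $s\hdp{\mbasis k}$, with no appeal to normal forms at all.

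Your route instead passes through the claim that $\hdp\beta$ equals the lead of the classical normal form of $s\proj{\mbasis k}$ modulo $\langle f_1,\ldots,f_i\rangle$, and you rightly flag this as ``the main obstacle.'' It is in fact more than an obstacle: the identity is false in general, and the step ``$s\hdp{\mbasis k}\notin L(\proj{\basis_i})$ forces $\hdp\beta = s\hdp{\mbasis k}$'' does not follow from what you establish. Regular $\sigSym$-reduction of an index-$k$ element with signature $s\mbasis k$ is \emph{not} restricted to reducers from $\basis_i$: it may also use index-$k$ reducers $b\gamma$ as long as $\sig{b\gamma}<s\mbasis k$, and these can kill a lead term that the classical normal form modulo the lower ideal would leave untouched. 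A concrete instance: with $f_1=x^2$, $f_2=xy+z^2$ under grevlex and $\potl$, the element of signature $xy\mbasis 2$ regularly $\sigSym$-reduces to lead $z^4$ (the term $xyz^2$ is killed by the regular reducer $z^2\mbasis 2$, an index-$k$ reducer), whereas the classical normal form of $xy\,f_2$ modulo $\langle x^2\rangle$ has lead $xyz^2$. So the two leads genuinely differ, and your step-by-step induction sketch only yields $\hdp\beta\notin L(\proj{\basis_i})$, which is weaker than the equality you need. To close the gap you would have to argue that under the PC hypothesis no such index-$k$ reducer can fire on the lead term, which is a nontrivial additional claim; the paper sidesteps all of this by working purely with the signature lineage of $\beta$ rather than with its polynomial lead.
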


\begin{proof}
Since $\ind\beta = k$ and $\mbasis k \neq \beta$, $\sig\beta = \lambda \mbasis k$
where $\lambda >1$. $\beta$ is a successor of some initial S-pair $\spair{\mbasis
k}{\mbasis j}$ for $j<k$ (possibly over several steps).
The signature of this initial S-pair is $\frac{\lcmm{\mbasis
  k}{\mbasis j}}{\hdp{\mbasis k}} \mbasis k$. Thus $\lambda = \lambda' \frac{\lcmm{\mbasis
  k}{\mbasis j}}{\hdp{\mbasis k}}$ for some $\lambda' \geq 1$.
  Furthermore, generating the S-pair $\spair{\mbasis k}\beta$ we get
\begin{align*}
\hdp{\mbasis k} \sig\beta &= \hdp{\mbasis k} \lambda' \frac{\lcmm{\mbasis
  k}{\mbasis j}}{\hdp{\mbasis k}}  \mbasis k \\
  & = \lambda' \lcmm{\mbasis k}{\mbasis j} \mbasis k 
  = \lambda' \lambda'' \hdp{\mbasis j} \mbasis k.
\end{align*}
By the same argument as in the proof of
Lemma~\ref{lem:covers-prod-crit-lower-idx} there exists a $\sigma \in \hdsyz$
that is the canonical rewriter in signature $\hdp{\mbasis k} \sig\beta$.
\end{proof}

Sadly, the generalization of Lemma~\ref{lem:covers-prod-crit-same-idx-initial}
by replacing $\mbasis k$ with an arbitrary $\alpha$, $\ind\alpha = k$ remains
unproven:

\begin{conjecture}
Assume $\potl$. Let $\alpha,\beta \in \module$ such that $\sig\alpha =
\lambda_\alpha \mbasis k$, $\sig\beta = \lambda_\beta \mbasis k$ for
$\lambda_\alpha,\lambda_\beta > 1$, and 
$\proj{\spair\alpha\beta}$ fulfills \prc{}.
Then $a\alpha$ or $b\beta$ is
rewritable w.r.t. $\hdsyz$.
\label{conj:covers-prod-crit-same-idx}
\end{conjecture}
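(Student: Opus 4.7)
My plan is to extend the ancestor-tracing argument used in Lemma~\ref{lem:covers-prod-crit-same-idx-initial}, which handles the special case $\alpha = \mbasis k$, to the general situation where both $\lambda_\alpha,\lambda_\beta > 1$. Writing $h_l := \hdp{\mbasis l}$ for brevity and using $\lcmm\alpha\beta = \hdp\alpha\hdp\beta$ from \prc{}, the pair takes the form $\spair\alpha\beta = \hdp\beta\alpha - \hdp\alpha\beta$ with signature $T = \max_{<}\bigl\{\hdp\beta\lambda_\alpha,\;\hdp\alpha\lambda_\beta\bigr\}\mbasis k$. Without loss of generality I assume $T = \hdp\beta\lambda_\alpha\mbasis k$; the opposite case is symmetric after exchanging the roles of $\alpha$ and $\beta$.

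Next I would trace $\alpha$ back through the chain of regular S-pair constructions that produced it, exactly as in the proof of Lemma~\ref{lem:covers-prod-crit-same-idx-initial}. Since $\sig\alpha$ lies at index $k$ but $\alpha \neq \mbasis k$, this chain bottoms out at some initial S-pair $\spair{\mbasis k}{\mbasis i}$ with $i<k$, whose signature is $\tfrac{h_i}{\gcd(h_k,h_i)}\mbasis k$. This yields a factorisation $\lambda_\alpha = \mu_\alpha\,\tfrac{h_i}{\gcd(h_k,h_i)}$ for some monomial $\mu_\alpha \geq 1$, hence
\[
T \;=\; \mu_\alpha\,\hdp\beta\,\frac{h_i}{\gcd(h_k,h_i)}\,\mbasis k.
\]
By Line~\ref{alg:rba:initialsyz} of Algorithm~\ref{alg:rba} the initial syzygy $f_i\mbasis k - f_k\mbasis i$ places an element of $\hdsyz$ with signature $h_i\mbasis k$. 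Consequently, rewritability of $\hdp\beta\alpha$ w.r.t.~$\hdsyz$ reduces to the single divisibility
\[
\gcd(h_k,h_i) \;\Bigm|\; \mu_\alpha\,\hdp\beta. \qquad (\star)
\]

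Establishing $(\star)$ is the step I expect to be the main obstacle. The structural information available is: (i) $\gcd(\hdp\alpha,\hdp\beta)=1$ by \prc{}; (ii) regular $s$-reducedness of $\alpha$ forbids any $h_l$ with $l<k$ from dividing $\hdp\alpha$; and (iii) $\hdp\alpha$ arises as the residue after the top terms of $\lambda_\alpha f_k$ and $\tfrac{h_k}{\gcd(h_k,h_i)} f_i$ cancel, possibly lowered by further reductions. I would try to combine (i) and (iii) to argue that the factor $\gcd(h_k,h_i)$ cannot hide inside $\hdp\alpha$ and must therefore migrate onto $\mu_\alpha$ or $\hdp\beta$, but making this rigorous seems to require a fine-grained combinatorial bookkeeping along the entire $s$-reduction chain producing $\alpha$. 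A more promising route, suggested by the empirical robustness of the conjecture across millions of tested inputs, is an induction on $T$ that exploits the full $\hdsyz$ available at the moment $\spair\alpha\beta$ is considered --- including syzygies added by $\updatesyz$ and those produced by zero reductions at signatures $<T$ during the processing of index~$k$ --- rather than only the initial syzygies. This richer reservoir is almost certainly where a proof of $(\star)$, or an argument circumventing it entirely, will have to come from.
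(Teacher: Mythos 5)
The statement you are trying to prove is labelled a \emph{conjecture} in the paper, and the paper leaves it open: the surrounding text explicitly says that the generalization of Lemma~\ref{lem:covers-prod-crit-same-idx-initial} ``remains unproven'' and then describes the gap. So there is no proof in the paper to compare your attempt against, and your proposal --- which honestly declares that it does not close the gap --- should not be judged as an incomplete proof of a settled result, but as a partial analysis of an open problem.

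That said, your reduction is a useful reformulation of the obstacle and is consistent with the paper's own discussion. Tracing the S-pair ancestry of $\alpha$ back to an initial pair $\spair{\mbasis k}{\mbasis i}$ gives the factorisation $\lambda_\alpha = \mu_\alpha\,\tfrac{h_i}{\gcd(h_k,h_i)}$, and rewritability of $\hdp\beta\alpha$ against the Koszul syzygy of signature $h_i\mbasis k$ then collapses to your divisibility $(\star)$: $\gcd(h_k,h_i)\mid\mu_\alpha\hdp\beta$. In Lemma~\ref{lem:covers-prod-crit-same-idx-initial} this divisibility is automatic because the multiplier is $\hdp{\mbasis k}=h_k$, which absorbs the $\gcd(h_k,h_i)$ in the denominator; once $\mbasis k$ is replaced by a general $\alpha$, that cancellation is lost. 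This matches the paper's statement that ``we loose the connection between $\hdp\alpha$ and $\hdp{\mbasis k}$''.

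Where your sketch is too narrow, and where the paper's discussion warns against the route you take, is the restriction to the $h_i$ coming from the ancestry of $\alpha$. The paper notes that in the observed examples the canonical rewriter $\sigma\in\hdsyz$ may have $\sig\sigma = \hdp{\mbasis j}\mbasis k$ for an index $j$ that is \emph{not} involved in any predecessor of $\alpha$ or $\beta$. Thus even a perfect bookkeeping of the ancestor chain cannot in general establish $(\star)$ for the particular $h_i$ it produces; one must quantify over all $h_1,\dots,h_{k-1}$ (equivalently, show $\hdp\alpha\,\lambda_\beta \in L(\proj{\basis_i})$), and neither your points (i)--(iii) nor a straightforward induction on $T$ obviously supplies this. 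Your proposal is therefore a faithful restatement of the open gap rather than a step past it.
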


The difference between Conjecture~\ref{conj:covers-prod-crit-same-idx} and
Lemma~\ref{lem:covers-prod-crit-same-idx-initial} lies in the fact that we loose the connection
between $\hdp\alpha$ and $\hdp{\mbasis k}$. The main gap in the proof of the
above conjecture is the following:
Since we assume $\potl$ all elements $\sigma \in \hdsyz$ with
$\sig\sigma = \lambda \mbasis k$ have $\lambda \in L\left(\proj{\basis_i}\right)$. Moreover
the following holds:
\begin{enumerate}
\item $\hdp\alpha \notin L\left(\proj{\basis_i}\right)$ since otherwise $\alpha$
would have been further \sreduced{} (all reductions with lower index elements
are regular \sreductions{}).
\item For $\sig\beta = \lambda_\beta \mbasis k$, $\lambda_\beta \notin
L\left(\proj{\basis_i}\right)$. Otherwise $\beta$ would not exist in $\basis$
since the S-pair it is reduced from would have been removed by \rwc{} w.r.t.
$\hdsyz$.
\end{enumerate}
Still, if the conjecture is true, it must hold that $\hdp\alpha \lambda_\beta
\in L\left(\proj{\basis_i}\right)$. In contrast to the proof of
Lemma~\ref{lem:covers-prod-crit-same-idx-initial} it is not even clear which
element from $\proj{\basis_i}$ might have constructed the corresponding syzygy.
There are examples where the canonical rewriter $\sigma \in \hdsyz$ could have
$\sig\sigma = \hdp{\mbasis j} \mbasis k$ whereas
$\hdp{\mbasis j}$ is not involved in any predecessor of $\alpha$ or $\beta$ at
all. 

\begin{remark}\
\begin{enumerate}
\item Note that for regular input sequences
Conjecture~\ref{conj:covers-prod-crit-same-idx} is clearly true since
\rba{} detects all syzygies (see, for example, Corollary~3
    in~\cite{fF52002Corrected}).
Moreover, there is a connection between
Conjecture~\ref{conj:covers-prod-crit-same-idx} and the Moreno-Soc\'ias conjecture
(\cite{moreno-socias-1991}) (paper in preparation).
Solving this problem might have a big impact on
\grobner{} basis computations due to revealing algebraic information not used
until now.
\item Clearly one can always add the corresponding principal syzygy as it is
done, for example, in the new version of \gvw{} (see Step~$4b~(b1)$ of
    Figure~3.1 in~\cite{gvwGVW2013}). Still the question of the conjecture is
open: Can the underlying structure of a general signature-based
\grobner{} basis algorithm using $\potl$ already predict those zero reductions
without further modifications? This is not a question of efficient
implementations, but focuses on the algebraic structures hidden underneath. 

Furthermore, do we get any more syzygies resp. relations when adding all
principal syzygies and interreducing \hdsyz{} or are those relations already
covered by an easy implementation of Algorithm~\ref{alg:rewritable-pc}? These
are relavant questions for understanding \grobner{} basis computations.
\end{enumerate}
\end{remark}

\section{Experimental results}
\label{sec:exp-results}
We give some experimental results, all computed over a field of characteristic
$32003$ with the graded reverse lexicographical monomial order $<$. All
computations where done with an implementation of \rba{} in \singular{} (available
since version $4.0.0$). All examples are available under
\begin{center}
\url{https://github.com/ederc/singular-benchmarks}.
\end{center}

Figure~\ref{fig:exp-results} shows the number of zero reductions for the
computation of the corresponding \grobner{} bases. \std{} denotes the
implementation of the Gebauer-M\"oller installation in \singular{}, ``U'' denotes the usage of
$\updatesyz$ as explained in Characteristic~\ref{char:updatesyz}, ``PC'' means
that \rba{} uses $\rewritablenoargnew$. For columns including ``PC'' the number
of Product criteria is given in brackets: those not found when checking
rewritability w.r.t. $\hdsyz$ first, those not found by checking rewritability
w.r.t. $\hdsyz$ and $\basis$ last. Due to the discussion in
Section~\ref{sec:potl-covers-prod-crit} there is no difference between using
$\rewritablenoarg$ or $\rewritablenoargnew$ for \rba{} with $\potl$: In all
examples all syzygies coming from \prc{} are rewitable w.r.t. $\hdsyz$.
\begin{center}
\begin{figure}
\centering
\begin{tabular} {c|c|c|c:c|c:c}
  \multirow{2}{*}{\bf Benchmark} & {\std{}} & {\rba{} $\potl$} &
   \multicolumn{2}{c|}{\rba{} $\schl$} & \multicolumn{2}{c}{\rba{} $\dpotl$}\\ 
  & & \multicolumn{1}{c|}{U}
  &\multicolumn{1}{c:}{U}
  & \multicolumn{1}{c|}{U+PC}
  &\multicolumn{1}{c:}{U}
  & \multicolumn{1}{c}{U+PC}\\
\hline
cyclic-8 & 4284 & 243 & 771 & 771(17,0) & 243 & 243(7,0)\\ 
cyclic-8-h & 5843 & 243 & 771 & 771(17,0) & 243 & 243(7,0)\\ 
eco-11 & 3476 & 0 & 614 & 614(770,0) & 541 & 538(556,0)\\ 
eco-11-h & 5429 & 502 & 629 & 608(57,0) & 502 & 502(10,0)\\ 
f-744 & 589 & 0 & 248 & 244(99,0) & 185 & 184(61,0)\\ 
f-744-h & 1267 & 189 & 248 & 244(49,0) & 189 & 189(42,0)\\ 
katsura-11 & 3933 & 0 & 348 & 304(275,0) & 0 & 0(62,0)\\ 
katsura-11-h & 3933 & 0 & 348 & 304(275,0) & 0 & 0(62,0)\\ 
noon-9 & 25508 & 0 & 682 & 646(505,0) & 0 & 0(21,0)\\ 
noon-9-h & 25508 & 0 & 682 & 646(505,0) & 0 & 0(21,0)\\ 
binomial-6-2 & 21 & 6 & 15 & 8(16,7) & 6 & 6(11,0)\\ 
binomial-6-3 & 20 & 13 & 15 & 9(6,6) & 13 & 13(4,0)\\ 
binomial-7-3 & 27 & 24 & 21 & 21(9,0) & 24 & 24(6,0)\\ 
binomial-7-4 & 41 & 16 & 19 & 16(8,3) & 16 & 16(5,0)\\ 
binomial-8-3 & 53 & 23 & 27 & 27(10,0) & 23 & 23(0,0)\\ 
binomial-8-4 & 40 & 31 & 26 & 26(3,0) & 16 & 31(0,0)\\ 
\hline
\end{tabular}
\caption{\# zero reductions and not detected Product criteria}
\label{fig:exp-results}
\end{figure}
\end{center}
\rwc{} almost always covers \prc{}
completey, but often first by using the rewritability check w.r.t. $\basis$.
This means that adding the signature of $\spair\alpha\beta$ in such a situation
enlarges $\hdsyz$ and thus might strengthen \rwc{}. Having
tested tens of millions of examples until now it is very rarely the case that
an S-pair fulfilling \prc{} is not detected by \rwc{} at all. Random systems
behave like the benchmarks given in Figure~\ref{fig:exp-results}, thus we
concentrated on the easiest cases in which such situations appear: binomial
ideals. For example, using $\schl$ for
\texttt{binomial-7-4}\footnote{Notation \texttt{binomial-7-4} means $7$ binomial generators
in a ring of $7$ variables, all homogeneous and of degree $4$.} $8$ S-pairs
fulfilling \prc{}  are not rewritable w.r.t. $\hdsyz$, but only $3$ of
them are not rewritable w.r.t. $\basis$. Still it makes sense to add all the
corresponding syzygies, since this decreases the number of zero reductions ($19$
to $16$). This shows again the difference between the ``local'' rewritable
w.r.t. $\basis$ and the ``global'' rewritable w.r.t. $\hdsyz$. On the other
hand, even if all those S-pairs are detected by the Rewritten criterion it is
sometimes useful to add the syzygies which are not in $\hdsyz$ in order to
decrease the number of zero reductions computed, see, for example,
\texttt{katsura-11} for $\schl$ or affine systems like \texttt{eco-11} or
\texttt{f-744} for $\dpotl$. Note that for homogeneous input
\rba{} using $\dpotl$ might rewrite S-pairs fulfilling \prc{}
only w.r.t. $\basis$, but not w.r.t. $\hdsyz$: Due to reasons of efficiency a
practical implementation of \rba{} initially checks the S-pairs at their
generation directly. At this point \rba{} might not have completed the
computation of all lower degree signature \grobner{} bases.
When using $\rewritablenoargnew$ as stated in
Algorithm~\ref{alg:rba}, that means first checking the S-pair before its potential
\sreduction{} (Line~\ref{alg:rba:rewritecheck}), a \grobner{} basis up to a
certain degree is already computed. So \rba{} with $\dpotl$ behaves in
the same way as \rba{} with $\potl$. Furthermore, note that in any
case the signature-based attempt predicts
many more zero reductions than the Gebauer-M\"oller installation.
\section{Conclusion}
We have given a discussion on different attempts to predict zero reductions
during \grobner{} basis computations. We have seen that \chc{} is
covered by \rwc{}. Still, efficient implementations of
\rba{} storing only the leading term of the syzygy cannot recover the syzygy of
the corresponding chain. So the global influence of the syzygy is hidden behind the
locality of rewriting w.r.t. some element in $\basis$.
Further research in this direction is crucial.

Even though we have shown examples where \prc{} is not completely
covered by \rwc{}. These cases are very rare. We presented an easy way
to make use of \prc{} in \rba{} without introducing overhead. For $\potl$ it seems
that all syzygies coming from \prc{} are already in $\hdsyz$. Proving this
conjecture might give further insight in relations constructed during
\grobner{} basis computations.

Next we investigate the connection to the ideas for generating minimal
sets of critical pairs presented in~\cite{ckr-2004}.
Another attempt is the
generalization of the conecpt of signatures, storing a few more
module terms to recover more syzygies, but keeping the introduced
computational overhead at a minimum. The discussion presented here generalized
different attempts in this direction (\cite{gashPhD2008,gerdtHashemiG2V,gvwGVW2013})
and answered open questions. The intention of this paper is not to introduce a
new, more efficient variant of signature-based algorithms, but to analyze and
to compare the known strategies.
Different views on the prediction of zero reductions may enable
us to exploit more of the algebraic structures behind \grobner{}
basis computations.

\end{document}